\newtheorem{Lemma}{Lemma}[section]
\newtheorem{theorem}[Lemma]{Theorem}
\newtheorem{lemma}[Lemma]{Lemma}
\newtheorem{proposition}[Lemma]{Proposition}
\newtheorem{corollary}[Lemma]{Corollary}
\newtheorem{definition}[Lemma]{Definition}
\def\into{ \rightarrowtail }
\def\onto{ \twoheadrightarrow }
\def\splito{ \rightleftarrows }
\def\EE{ \mathbb{E} }
\def\CC{ \mathbb{C} }
\def\VV{ \mathbb{V} }
\def\TT{ \mathbb{T} }
\def\AAA{ \mathbb{A} }
\def\trio{ \triangleright}
\def\cleft{\hbox{[\kern-.16em\hbox{[}}}
\def\cright{\hbox{]\kern-.16em\hbox{]}}}
\newcommand{\Ker}{ \ensuremath{\mathrm{Ker}} }
\newcommand{\Eq}{ \ensuremath{\mathrm{Equ}}}
\newcommand{\Rng}{\mathsf{Rng}}
\newcommand{\Alg}{\mathsf{Alg}}
\newcommand{\Gp}{\mathsf{Gp}}
\newcommand{\DiGp}{\mathsf{DiGp}}
\newcommand{\Aut}{\mathsf{Aut}}
\newcommand{\SKB}{\mathsf{SKB}}
\newcommand{\Lie}{\mathsf{Lie}}
\newcommand{\Imm}{\mathsf{Im}}
\newcommand{\Equ}{\mathsf{Equ}}
\newcommand{\Mon}{\mathsf{Mon}}
\newcommand{\Norm}{\mathsf{Norm}}
\newcommand{\id}{\mbox{\rm id}}
\begin{document}
		\title{Aspects of the Category $\SKB$ of Skew Braces}

	\author{Dominique Bourn}
	\address{Univ. Littoral C\^ote d'Opale, UR 2597, LMPA,
Laboratoire de Math\'ematiques Pures et Appliqu\'ees Joseph Liouville,
F-62100 Calais, France}
\email{bourn@univ-littoral.fr}
\author{Alberto Facchini}
	\address{Dipartimento di Matematica ``Tullio Levi-Civita'', Universit\`a di 
Padova, 35121 Padova, Italy}
 \email{facchini@math.unipd.it}
\thanks{The second author is partially supported by Ministero dell'Istruzione, dell'Universit\`a e della Ricerca (Progetto di ricerca di rilevante interesse nazionale ``Categories, Algebras: Ring-Theoretical and Homological Approaches (CARTHA)''), Fondazione Cariverona (Research project ``Reducing complexity in algebra, logic, combinatorics - REDCOM'' within the framework of the programme Ricerca Scientifica di Eccellenza 2018), and the Department of Mathematics ``Tullio Levi-Civita'' of the University of Padua (Research programme DOR1828909 ``Anelli e categorie di moduli'').}
\author{Mara Pompili}
\address{Dipartimento di Matematica ``Tullio Levi-Civita'', Universit\`a di 
Padova, 35121 Padova, Italy}
 \email{mara.pompili@studenti.unipd.it}
   \keywords{Brace; Skew brace; Yang-Baxter equation; Mal'tsev category; Protomodular category. \\ {\small 2020 {\it Mathematics Subject Classification.} Primary 16T25, 18E13, 20N99.}
}
   \begin{abstract} We examine the pointed protomodular category $\SKB$ of left skew braces. We study the notion of commutator of ideals in a left skew brace. Notice that in the literature, ``product'' of ideals of skew braces is often considered. We show that Huq=Smith for left skew braces. Finally, we give a set of generators for the commutator of two ideals, and prove that every ideal of a left skew brace has a centralizer.
\end{abstract}

    \maketitle

	\date{}
	
	\maketitle
	
	\section*{Introduction}
	
	Braces appear in connections to the study of set-theoretic solutions of the Yang-Baxter equation.
	A {\em set-theoretic solution of the Yang-Baxter equation} is a pair $(X,r)$, where $X$ is a set, $r\colon X\times X\to X\times X$ is a bijection, and $(r\times\id)(\id\times r)(r\times\id)=(\id\times r)(r\times\id)(\id\times r)$
	\cite{23}.  Set-theoretic solutions of the Yang-Baxter equation appear, for instance, in the study of representations of braid groups, and form a category {\bf SYBE}, whose objects are these pairs $(X,r)$, and morphisms $f\colon (X,r)\to(X',r')$ are the mappings $f\colon X\to X'$ that make the diagram
	$$\xymatrix{
		X\times X\ar[r]^{f\times f}\ar[d]^r&X'\times X'\ar[d]_r\\
		X\times X\ar[r]_{f\times f}&X'\times X'}$$ commute.
	
	One way to produce set-theoretic solutions of the Yang-Baxter equation is using left skew braces.
	
	\medskip
	 
\noindent\textbf{Definition} 
	{\rm \cite{GV} A {\sl (left) skew brace} is
		a triple $(A, *,\circ)$, where $(A,*) $ and $(A,\circ)$ are groups (not necessarily abelian) such that \begin{equation}a\circ (b * c) = (a\circ b)*a^{-*}* ( a\circ c)\label{lsb}\tag{B}\end{equation}
		for every $a,b,c\in A$. Here $a^{-*}$ denotes the inverse of $a$ in the group $(A,*)$. The inverse of $a$ in the group $(A,\circ)$ will be denoted by $a^{-\circ}$.}
		
A brace is sometimes seen as an algebraic structure similar to that of a ring, with distributivity warped in some sense. But a better description of a brace is probably that of an algebraic structure with two group structures out of phase with each other.
	
\medskip

For every left skew brace $(A, *,\circ)$, the mapping $$r \colon A\times A \to A \times A,\quad r(x,y) = (x^{-*} *(x\circ y),(x^{-*} *(x\circ y))^{-\circ}\circ x\circ y),$$
is a non-degenerate set-theoretic solution of the Yang-Baxter equation (\cite[Theorem~3.1]{GV} and \cite[p.~96]{KSV}). Here ``non-degenerate'' means that the mappings $\pi_1r(x_0,-)\colon A\to A$ and $\pi_2r(-,y_0)\colon A\to A$  are bijections for every $x_0\in A$ and every $y_0\in A$.

The simplest examples of left skew braces are:

(1) For any associative ring $(R,+, \cdot)$, the Jacobson radical $(J(R),+, \circ)$, where $\circ$ is the operation on $J(R)$ defined by $x\circ y=xy+x+y$ for every $x,y\in J(R)$.

(2) For any group $(G,*)$, the left skew braces $(G,*,*)$ and $(G,*,*^{op})$.

Several non-trivial examples of skew braces can be found in \cite{SV}. A complete classification of braces of low cardinality has been obtained via computer \cite{KSV}.

A homomorphism of skew braces is a mapping which is a group homomorphism for both the operations. This defines the category $\SKB$ of skew braces. 

From \cite{GV}, we know that in a skew brace the units of the two groups coincide. So, $\SKB$ appears as a fully faithful subcategory $\SKB\hookrightarrow \DiGp$  of the category $\DiGp$ of digroups, where a digroup is a triple $(G,*,\circ)$ of a set $G$ endowed with two group structures with same unit. This notion was introduced in \cite{Normal} and devised during discussions between the first author and G. Janelidze. 

There are two forgetful functors $U_i:\DiGp \to \Gp, \; i\in\{0,1\},$  associating respectively the first and the second group structures. They both reflect isomorphisms. Since $U_0$ is left exact and reflects isomorphisms, it naturally allows the lifting of the protomodular  aspects of the category $\Gp$ of groups to the category $\DiGp$. In turn, the left exact fully faithful embedding $\SKB\hookrightarrow \DiGp$ makes $\SKB$ a pointed protomodular category. The protomodular axiom  was introduced in \cite{B0} in order to extract the essence of the homological constructions and in particular to induce an {\em intrinsic notion of
	exact sequence}.

In this paper, after recalling the basic facts about protomodular categories, we study the ``protomodular aspects'' of left skew braces, in particular in relation to the category of digroups. We study the notion of commutator of ideals in a left skew brace (in the literature, ``product'' of ideals of skew braces is often considered). We show that Huq=Smith for left skew braces. Notice that  Huq${}\ne{}$Smith for digroups and near-rings \cite{Commutators}. We give a set of generators for the commutator of two ideals, and prove that every ideal of a left skew brace has a centralizer.

\section{Basic recalls on protomodular categories}

In this work, any category $\EE$ will be supposed finitely complete, which implies that it has a terminal object $1$. The terminal map from $X$ is denoted $\tau_X: X\to 1$. Given any map $f: X\to Y$, the equivalence relation $R[f]$ on $X$ is produced by the pullback of $f$ along itself. The map $f$ is said to be a {\em regular epimorphism} in $\EE$ when $f$ is the quotient of $R[f]$. When it is the case, we denote it by a double head arrow $X\onto Y$.

\subsection{Pointed protomodular categories}

The category $\EE$ is said to be pointed when the terminal object $1$ is initial as well. Let us recall that a pointed category $\AAA$ is additive if and only if, given any split epimorphism $f: X\splito Y, \; fs=1_Y$, the following downward pullback:
$$
\xymatrix@=20pt{
	\Ker f   \ar@{ >->}[rr]^{k_f} \ar@<-4pt>[d]_{} && X \ar@<-4pt>[d]_{f}  && \\
	1 \ar@{ >->}[rr]_{0_Y} \ar@{ >->}[u]_{0_{K}} & & Y \ar@{ >->}[u]_{s} }
$$
is an upward pushout, namely if and only if $X$ is the direct sum (= coproduct) of $Y$ and $\Ker f$. Let us recall the following:
\begin{definition}{\rm \cite{B0}
	A pointed category $\EE$ is said to be {\em protomodular} when, given any split epimorphism as above, the pair $(k_f,s)$ of monomorphisms is jointly strongly epic.}
\end{definition}
This means that the only subobject $u:U\into X$ containing the pair $(k_f,s)$  of subobjects is, up to isomorphism, $1_X$. It implies that, given any pair $(f,g): X \rightrightarrows Z$ of arrows which are equalized by $k_f$ and $s$, they are necessarily equal (take the equalizer of this pair). Pulling back the split epimorphisms along the initial map $0_Y: 1\into Y$ being a left exact process, the previous definition is equivalent to saying that this process reflects isomorphisms.

The category $\Gp$ of groups is clearly pointed protomodular. This is the case of the category $\Rng$ of rings as well, and more generally, given a commutative ring $R$, of any category  $R$-$\Alg$ of any given kind of $R$-algebras without unit, possibly non-associative. This is in particular the case of the category $R$-$\Lie$ of Lie $R$-algebras. Even for $R$ a non-commutative ring, in which case $R$-algebras have a more complex behaviour (they are usually called $R$-rings, see \cite[p.~36]{Bergman} or \cite[p.~52]{libromio}), one has that the category $R$-$\Rng$ of $R$-rings is pointed protomodular, as can be seen from the fact that the forgetul functor $R$-$\Rng\to Ab$ reflects isomorphisms and $Ab$ is protomodular.

The pointed protomodular axiom  implies that the category $\EE$ shares with the category $\Gp$ of groups the following well-known {\em Five Principles}:\\
(1) a morphism $f$ is a monomorphism if and only if its kernel $\Ker f$ is trivial \cite{B0};\\
(2) any regular epimorphism is the cokernel of its kernel, in other words any regular epimorphism produces an exact sequence, which determines \emph{an intrinsic notion of exact sequences} in $\EE$ \cite{B0};\\
(3) there a specific class of monomorphisms $u:U\into X$, the \emph{normal monomorphisms} \cite{B1}, see next section ;\\
(4) there is an intrinsic notion of abelian object \cite{B1}, see section \ref{abob};\\
(5) any reflexive relation in $\EE$ is an equivalence relation, i.e. the category $\EE$ is a Mal'tsev one \cite{B2}.

So, according to Principle (1), a pointed protomodular category is characterized by the validity of the \emph{split short five lemma}. Generally, Principle (5) is not directly  exploited in $\Gp$; we shall show in Section \ref{asc} how importantly it works out inside a pointed protomodular category $\EE$. Pointed protomodular varieties of universal algebras are characterized in \cite{BJ}.

\subsection{Normal monomorphisms}

\begin{definition}{\rm \cite{B1}
In any category $\EE$, given a pair $(u,R)$ of a monomorphism $u:U\into X$ and an equivalence relation $R$ on $X$, the monomorphism $u$ is said to be {\em normal to $R$ }when the equivalence relation $u^{-1}(R)$ is the indiscrete equivalence relation $\nabla_X=R[\tau_X]$ on $X$ and, moreover, any commutative square in the following induced diagram is a pullback:}
$$\xymatrix@=3pt{
	{U\times U\;}   \ar@<-2ex>[ddd]_{d_0^U}  \ar@{>->}[rrrrr]^{\check u} \ar@<2ex>[ddd]^{d_1^U} &&&&&  {R\;}  \ar@<-2ex>[ddd]_{d_0^R} \ar@<2ex>[ddd]^{d_1^R} \\
	&&&&\\
	&&&&\\
	{U\;} \ar@{>->}[rrrrr]_{u}  \ar[uuu]|{s_0^U} &&&&& X \ar[uuu]|{s_0^R}
}
$$
\end{definition}
In the category $Set$, provided that $U\neq \emptyset$, these two properties characterize the equivalence classes of  $R$. By the Yoneda embedding, this implies the following:
\begin{proposition}
Given any equivalence relation $R$ on an object $X$ in a category $\EE$, for any map $x$, the following upper monomorphism  $\check x=d_1^R.\bar x$ is normal to $R$:
$$\xymatrix@=3pt{
	{I_R^x;}   \ar[ddd]  \ar@{>->}[rrrrr]^{\bar x}  &&&&&  {R\;}  \ar[ddd]_{d_0^R}  \ar[rrrrr]^{d_1^R}  &&&&& X\\
	&&&&\\
	&&&&\\
	{1\;} \ar@{>->}[rrrrr]_{x}   &&&&& X 
}
$$
\end{proposition}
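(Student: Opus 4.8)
The plan is to exhibit the object $I_R^x$ explicitly as a pullback and then verify the two defining properties of normality directly, using the Yoneda embedding to reduce the equivalence-relation-theoretic content to the case of $Set$. First I would form the pullback $\bar x\colon I_R^x\to R$ of $d_0^R\colon R\to X$ along $x\colon 1\to X$; concretely, in $Set$ this is the equivalence class of the element named by $x$, regarded as a subobject of $R$ via $\bar x$ and of $X$ via the composite $\check x=d_1^R\circ\bar x$. Since $d_1^R$ restricted to a single fibre $d_0^{R-1}(x)$ is injective (because $R$ is an equivalence relation, hence a \emph{relation}: the pair $(d_0^R,d_1^R)$ is jointly monic), the composite $\check x$ is a monomorphism, and by Yoneda this holds in any finitely complete $\EE$.

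Next I would check that $\check x^{-1}(R)=\nabla_{I_R^x}$. In $Set$ this is transparent: any two elements of the same $R$-equivalence class are $R$-related, so pulling $R$ back along the inclusion of that class gives the indiscrete relation. The general statement follows by Yoneda once one expresses $\check x^{-1}(R)$ as a finite limit and the claimed isomorphism as a compatible family of maps. For the second property---that every commutative square in the induced diagram relating $I_R^x\times I_R^x$, $R$, $I_R^x$, and $X$ is a pullback---the key observation is again a $Set$-level fact about equivalence relations: the map $\check u\colon I_R^x\times I_R^x\to R$ sends a pair $(a,b)$ in the class to the element $(a,b)\in R$, and this identifies $I_R^x\times I_R^x$ with the full preimage $(d_0^R)^{-1}(I_R^x)$ (equivalently $(d_1^R)^{-1}(I_R^x)$); the relevant squares are then pullbacks because restricting the domain and codomain of $d_0^R$ (resp. $d_1^R$) to a union of fibres always yields a pullback square. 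Transporting this through Yoneda gives the result in $\EE$.

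The main obstacle I anticipate is bookkeeping rather than conceptual: one must be careful that ``$I_R^x$'' really denotes a \emph{subobject} of $X$ (via $\check x$) and simultaneously carries the structure making $\check x$ normal to $R$, and that the diagram in the Proposition statement---with its vertical legs $d_0^R$, $d_1^R$ and the induced map into $I_R^x\times I_R^x$---is the one appearing in the Definition of normality with $U:=I_R^x$ and $u:=\check x$. In particular one needs that $\check u$ in that definition agrees with the map $\bar x\times_? \bar x$ one obtains from the pullback description; verifying this compatibility, and checking the degenerate case $I_R^x=\emptyset$ (i.e. $x$ not ``in'' $R$, which cannot happen here since $(d_0^R,d_1^R)$ is reflexive and $x$ factors through $s_0^R$... actually $I_R^x$ always contains the point $s_0^R\circ x$, so it is never initial), are the only places where care is required. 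Everything else is a routine application of the Yoneda lemma to transfer the elementary $Set$-theoretic verification to an arbitrary finitely complete category $\EE$.
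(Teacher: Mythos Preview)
Your proposal is correct and follows essentially the same approach as the paper: the paper's entire argument is the remark preceding the Proposition, namely that in $Set$ the two defining properties of normality characterize the equivalence classes of $R$, together with an appeal to the Yoneda embedding. You have simply unpacked this same idea with more care (explicitly forming the pullback, checking that $\check x$ is monic via the joint monicity of $(d_0^R,d_1^R)$, and noting that reflexivity guarantees $I_R^x$ is non-empty), so there is nothing to correct.
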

In a pointed category $\EE$, taking the initial map $0_X: 1\into X$ gives rise to a monomorhism $\iota_R: I_R\into X$ which is normal to $R$. This construction produces a preorder mapping $\iota^X: \Equ_X\EE \to \Mon_X\EE$ from the preorder of the equivalence relations on $X$ to the preorder of subobjects of $X$ which preserves intersections. Starting with any map $f: X\to Y$, we get $I_{R[f]}=\Ker f$ which says that any kernel map $k_f$ is  normal to $R[f]$. Principle (3) above is a consequence of the fact \cite{B1} that in a protomodular category a monomorphism is normal to at most one equivalence relation (up to isomorphism). So that being normal, for a monomorphism $u$, becomes a property in this kind of categories. This is equivalent to saying that the preorder homomorphism $\iota_X: \Equ_X\EE \to \Mon_X\EE$ reflects inclusions; so, the preorder $\Norm_X$ of normal subobjects of $X$ is just the image $\iota^X(\Equ_X)\subset \Mon_X$.

\subsection{Regular context}

Let us recall from \cite{Barr} the following:
\begin{definition} {\rm 
A category $\EE$ is {\em regular }when it satisfies the two first conditions, and {\em exact} when it satisfies all the three conditions:\\
(1) regular epimorphisms are stable under pullbacks;\\
(2) any kernel equivalence relation $R[f]$ has a quotient $q_f$;\\
(3) any equivalence relation $R$ is a kernel equivalence relation.}
\end{definition}
Then, in the regular context, given any map $f: X\to Y$, the following canonical factorization $m$ is necessarily a monomorphism:
$$\xymatrix@=3pt{
	&&&  \Imm_f  \ar@{ >.>}[dddrrr]^{m} \\
	&&&&\\
	&&&&\\
	{X\;} \ar@{->>}[rrruuu]^{q_f} \ar[rrrrrr]_{f}   &&&&&& Y 
}
$$
This produces a canonical decomposition of the map $f$ in a monomorphism and a regular epimorphism which is stable under pullbacks. Now, given any regular epimorhism $f:X\onto Y$ and any subobject $u:U\into X$, the {\em direct image} $f(u): f(U)\into Y$ of $u$ along the regular epimorphism $f$ is given by $f(U)=\Imm_{f.u}\into Y$.

Any variety in the sense of Universal Algebra is exact and regular epimorphisms coincide with surjective homomorphisms.

\subsection{Homological categories}

The significance of pointed protomodular categories grows up in the regular context since, in this context, the split short five lemma can be extended to any exact sequence. Furthermore, the $3\times 3$ lemma, Noether isomorphisms and snake lemma hold; they are all collected in \cite{BB}. This is the reason why a regular pointed protomodular category $\EE$ is called \emph{homological}.

\section{Protomodular aspects of skew braces}

\subsection{Digroups}

From \cite{Normal}, we get the characterization of normal monomorphisms in $\DiGp$:
\begin{proposition}\label{normal1}
	A suboject $i: (G,*,\circ)\into (K,*,\circ)$ is normal in the category $\DiGp$  if and only if the three following conditions hold:\\
	{\rm (1)} $i:(G,*)\into (K,*)$ is normal in $\Gp$,\\{\rm (2)} $i:(G,\circ)\into (K,\circ)$ is normal in $\Gp$,\\
	{\rm (3)} for  all $(x,y)\in K\times K$, $x^{-*}*y\in G$ if and only if $x^{-\circ}\circ y\in G$.
\end{proposition}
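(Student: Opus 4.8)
The plan is to translate ``normal monomorphism'' into ``class of the unit of a congruence'' on both sides and compare. Recall from the introduction that $\DiGp$ is pointed protomodular; hence, as recalled in Section 1, a monomorphism $i\colon G\into K$ in $\DiGp$ is normal if and only if $G$ is, up to isomorphism, the subobject $I_R\into K$ attached to some equivalence relation $R$ on $K$ in $\DiGp$, in which case $i\cong\iota_R$. So everything hinges on a concrete description of $\Equ_K\DiGp$. The forgetful functors $U_0,U_1\colon\DiGp\to\Gp$ are left exact, jointly conservative, and send the zero object to the zero object; since equivalence relations, inverse images along a mono, and the indiscrete relation $\nabla$ are built from finite limits, $U_0$ and $U_1$ preserve all of these and commute with the formation of $\iota_R$. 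Consequently an equivalence relation $R$ on $(K,*,\circ)$ in $\DiGp$ is exactly a subset $R\subseteq K\times K$ that is at the same time a congruence of the group $(K,*)$ and a congruence of the group $(K,\circ)$; via the classical correspondence in $\Gp$ between congruences and normal subgroups applied to each operation, such an $R$ amounts to a subset $N=I_R=\{x\in K\mid(e,x)\in R\}$ which is normal in $(K,*)$, normal in $(K,\circ)$, and for which $x^{-*}*y\in N\iff x^{-\circ}\circ y\in N$ for all $x,y\in K$ (this last equality saying precisely that the two congruences determined by $N$ for the two operations coincide).

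Granting this, the direct implication is immediate: if $i\colon(G,*,\circ)\into(K,*,\circ)$ is normal in $\DiGp$, then $i\cong\iota_R$ for some $R\in\Equ_K\DiGp$ and $G=I_R$; applying $U_0$ gives $U_0(i)\cong\iota_{U_0 R}$, normal to the equivalence relation $U_0(R)$ in $\Gp$, which is condition (1), and $U_1$ gives condition (2); finally, reading $R$ off from its unit class $G$ for each operation yields $x^{-*}*y\in G\iff(x,y)\in R\iff x^{-\circ}\circ y\in G$, which is condition (3).

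Conversely, suppose (1), (2), (3) hold. Condition (1) produces the congruence $R_*=\{(x,y)\mid x^{-*}*y\in G\}$ of $(K,*)$, condition (2) the congruence $R_\circ=\{(x,y)\mid x^{-\circ}\circ y\in G\}$ of $(K,\circ)$, and condition (3) says exactly that $R_*=R_\circ$; denote this common relation by $R$. Then $R$ is simultaneously an equivalence relation on the set $K$, a subgroup of $(K\times K,*)$, and a subgroup of $(K\times K,\circ)$, hence a reflexive sub-digroup of $K\times K$, i.e. an element of $\Equ_K\DiGp$, with $I_R=G$. Therefore $i\cong\iota_R$ is normal in $\DiGp$.

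The step I expect to be the real work is the first paragraph: one must check carefully that an equivalence relation in $\DiGp$ genuinely splits as a pair consisting of a $*$-congruence and a $\circ$-congruence \emph{sharing the same underlying subset of $K\times K$} (so that the compatibility condition (3) is forced rather than extra data), and that passing through $U_0$ and $U_1$ preserves normality in both directions; the latter uses only that each $U_i$ is left exact and fixes the zero object, so that it commutes with $\nabla$ and with the construction of $\iota_R$. After that, the argument is just the standard group-theoretic dictionary between congruences and normal subgroups, applied once for each of the two group operations.
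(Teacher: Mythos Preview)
Your argument is correct. The key observation---that an internal equivalence relation on $K$ in $\DiGp$ is precisely a subset of $K\times K$ which is simultaneously a congruence for $(K,*)$ and for $(K,\circ)$, and that condition~(3) is exactly what forces the two group-theoretic congruences attached to $G$ to have the same underlying relation---is the heart of the matter, and you have it right. The one point you flag as ``the real work'' is in fact routine once one remembers that $\DiGp$ is a variety of universal algebras (two group structures sharing a single constant $e$), so that internal equivalence relations are literally congruences in the universal-algebraic sense; this is what makes the splitting into a $*$-congruence and a $\circ$-congruence on the \emph{same} subset automatic rather than extra data.

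As for comparison with the paper: there is nothing to compare. The paper does not prove Proposition~\ref{normal1}; it merely quotes it from \cite{Normal}. Your write-up is therefore a genuine, self-contained proof where the paper offers only a citation. If you wanted to tighten it, you could drop the discussion of $U_0,U_1$ preserving $\nabla$ and $\iota_R$ and simply invoke the variety structure of $\DiGp$ directly: normal subobjects are exactly the congruence classes of the unit, and a congruence in $\DiGp$ is by definition compatible with both operations, which unwinds immediately to (1), (2), (3).
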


\subsection{Skew braces}

The following observation is very important:
\begin{proposition}
Let $(G,*,\circ)$ be any skew brace. Consider the mapping $\lambda:G\times G\to G$	defined by $\lambda(a,u)=a^{-*}*(a\circ u)$. Then:\\
{\rm (1)} $\lambda_{a}=\lambda(a,-)$ is underlying a group homomorphism $(G,\circ)\to \Aut (G,*)$, and this condition is equivalent to {\rm (\ref{lsb})};\\
{\rm (2)} we have \begin{equation}\lambda({a^{-\circ}},u)=(a^{-\circ})^{-*}*(a^{-\circ}\circ u)=a^{-\circ}\circ(a*u).\label{doublech}\end{equation}
\end{proposition}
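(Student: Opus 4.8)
The plan is to verify each of the two statements by direct computation in the skew brace $(G,*,\circ)$, using only the brace axiom (\ref{lsb}) together with the group axioms for $(G,*)$ and $(G,\circ)$ and the fact (recalled from \cite{GV}) that the two group units coincide, so that $0^{-*}=0^{-\circ}=0$ and we may write $0$ unambiguously.

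For part (1), first I would show that each map $\lambda_a\colon u\mapsto a^{-*}*(a\circ u)$ lands in $\Aut(G,*)$ and that $a\mapsto \lambda_a$ is a homomorphism $(G,\circ)\to\Aut(G,*)$. That $\lambda_a$ is multiplicative for $*$ is exactly a rewriting of (\ref{lsb}): expanding $\lambda_a(u*v)=a^{-*}*(a\circ(u*v))$ and substituting (\ref{lsb}) for $a\circ(u*v)$ gives $a^{-*}*(a\circ u)*a^{-*}*(a\circ v)=\lambda_a(u)*\lambda_a(v)$; conversely, unwinding the requirement that $\lambda_a$ be a $*$-endomorphism for all $a$ reproduces (\ref{lsb}), which gives the claimed equivalence. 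Next, $\lambda_0=\id$ since $a=0$ makes $a^{-*}=0$ and $a\circ u=u$; and $\lambda_{a\circ b}=\lambda_a\circ\lambda_b$ follows by expanding $\lambda_{a\circ b}(u)=(a\circ b)^{-*}*((a\circ b)\circ u)$ and inserting $(a\circ b)\circ u=a\circ(b\circ u)$, then applying (\ref{lsb}) to the term $a\circ\bigl(b^{-*}*(\cdots)\bigr)$ after writing $(a\circ b)^{-*}=(b^{-*})*\lambda_b^{-1}$-type identities; more cleanly, one checks $\lambda_{a\circ b}=\lambda_a\circ\lambda_b$ by evaluating both sides on $u$ and reducing with (\ref{lsb}). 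Being a homomorphism into $\Aut(G,*)$, each $\lambda_a$ is in particular a bijective $*$-endomorphism, hence an automorphism, so the map is well defined.

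For part (2), the two asserted expressions for $\lambda_{a^{-\circ}}(u)$ are obtained as follows. The first, $\lambda_{a^{-\circ}}(u)=(a^{-\circ})^{-*}*(a^{-\circ}\circ u)$, is just the definition of $\lambda$ applied at the element $a^{-\circ}$. For the second, $\lambda_{a^{-\circ}}(u)=a^{-\circ}\circ(a*u)$, I would use part (1): since $\lambda_{a}\circ\lambda_{a^{-\circ}}=\lambda_{a\circ a^{-\circ}}=\lambda_0=\id$, we have $\lambda_{a^{-\circ}}=\lambda_a^{-1}$, so it suffices to check that $\lambda_a\bigl(a^{-\circ}\circ(a*u)\bigr)=u$. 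Expanding the left side, $\lambda_a(w)=a^{-*}*(a\circ w)$ with $w=a^{-\circ}\circ(a*u)$ gives $a^{-*}*\bigl(a\circ a^{-\circ}\circ(a*u)\bigr)=a^{-*}*(a*u)=u$, as required.

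I do not expect a serious obstacle here; the statement is essentially a bookkeeping lemma. The one place to be careful is the direction ``(\ref{lsb}) $\Leftrightarrow$ ($\lambda_a$ is a $*$-endomorphism)'': one must make sure the equivalence is stated and used with the quantifier over all $a$ (for a fixed $a$ the endomorphism condition is genuinely weaker), and one must not silently assume the image lies in $\Aut$ before establishing injectivity/surjectivity of $\lambda_a$, which comes for free only once the homomorphism property $\lambda_{a^{-\circ}}=\lambda_a^{-1}$ has been proved. Organizing the argument so that multiplicativity, $\lambda_0=\id$, and the composition law are proved first, and bijectivity deduced afterward, avoids any circularity.
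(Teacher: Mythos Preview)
Your proposal is correct. For part (1) you supply the standard verification that the paper simply outsources to \cite{GV}, and your organization (multiplicativity of $\lambda_a$ first, then $\lambda_0=\id$ and $\lambda_{a\circ b}=\lambda_a\circ\lambda_b$, bijectivity last) is exactly the right way to avoid circularity; the sketchy ``$(a\circ b)^{-*}=\ldots$-type identities'' line is harmless since the clean check you allude to does go through.

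For part (2) your route differs from the paper's. You invoke part (1) to get $\lambda_{a^{-\circ}}=\lambda_a^{-1}$ and then verify $\lambda_a\bigl(a^{-\circ}\circ(a*u)\bigr)=u$ using associativity of $\circ$. The paper instead applies the brace axiom (\ref{lsb}) directly to the right-hand side: with $a^{-\circ}$ in the distributing slot,
\[
a^{-\circ}\circ(a*u)=(a^{-\circ}\circ a)*(a^{-\circ})^{-*}*(a^{-\circ}\circ u)=(a^{-\circ})^{-*}*(a^{-\circ}\circ u)=\lambda_{a^{-\circ}}(u),
\]
since $a^{-\circ}\circ a$ is the unit. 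This is a one-line computation that is logically independent of part (1). Your approach is slightly longer and leans on the homomorphism property, but it has the virtue of making explicit why the formula is natural: it is just the statement that $a^{-\circ}\circ(a*-)$ inverts $\lambda_a$. Either argument is fine.
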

\proof
For (1), see \cite{GV}. For (2), we have $(a^{-\circ}\circ a)*(a^{-\circ})^{-*}*(a^{-\circ}\circ u)=
(a^{-\circ})^{-*}*(a^{-\circ}*u)=\lambda({a^{-\circ}},u)$.
\endproof

\subsection{First properties of skew braces} 

The following observation is straightforward:
\begin{proposition}
	$\SKB$ is a Birkhoff subcategory of $\DiGp$.
\end{proposition}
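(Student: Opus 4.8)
The plan is to verify the definition of a Birkhoff subcategory directly: a Birkhoff subcategory of an exact category is a full subcategory that is closed under subobjects, products, and quotients (regular-epi images). Since $\DiGp$ is a variety (hence exact) and $\SKB \hookrightarrow \DiGp$ is a full subcategory, it suffices to check that the class of digroups satisfying the skew brace axiom (\ref{lsb}) is closed under these three operations. Equivalently, since $\SKB$ is itself a variety (it is defined by equations in the language of two group operations), one can simply observe that a variety is always closed under subalgebras, products, and homomorphic images, and that in a variety regular epimorphisms are exactly the surjective homomorphisms; the only thing left to say is that these constructions, carried out in $\SKB$, agree with those carried out in $\DiGp$, i.e. that the inclusion preserves and reflects the relevant limits and regular epimorphisms.

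Concretely, I would proceed as follows. First, recall that $\SKB$ is the full subcategory of $\DiGp$ determined by the additional identity (\ref{lsb}); a morphism of digroups between two skew braces is automatically a morphism of skew braces, so the inclusion is full and faithful. Second, closure under products: given a family $(A_i,*,\circ)$ of skew braces, form the product digroup $\prod_i A_i$ with componentwise operations; axiom (\ref{lsb}) holds componentwise, hence holds in the product, and the product digroup is the product in $\SKB$ as well. Third, closure under subobjects: a sub-digroup $(G,*,\circ)\into (K,*,\circ)$ of a skew brace inherits (\ref{lsb}) by restriction, since the identity is universally quantified, so $G$ is again a skew brace. Fourth, closure under quotients: if $(A,*,\circ)$ is a skew brace and $q\colon A \onto B$ is a regular epimorphism in $\DiGp$, then $q$ is surjective (regular epis in a variety are the surjections), so every element of $B$ is of the form $q(a)$; applying $q$ to (\ref{lsb}) in $A$ and using that $q$ is a homomorphism for both $*$ and $\circ$ shows (\ref{lsb}) holds in $B$, hence $B \in \SKB$.

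Finally I would record that these constructions are computed the same way inside $\SKB$ and inside $\DiGp$: the inclusion creates limits (products and equalizers, hence all finite limits) because a limit of skew braces in $\DiGp$ lands in $\SKB$ by the subobject/product closure just established, and it preserves regular epimorphisms because in both varieties regular epis are the surjective homomorphisms. Therefore $\SKB \hookrightarrow \DiGp$ is a full, (regular-epi)-reflective-flavoured embedding closed under subobjects, products and quotients, which is exactly the definition of a Birkhoff subcategory. I do not expect any real obstacle here; the only point requiring a word of care is the quotient case, where one must invoke that regular epimorphisms in the variety $\DiGp$ coincide with surjective homomorphisms (already noted in the excerpt for varieties in general) so that one genuinely has enough elements in $B$ to check the identity.
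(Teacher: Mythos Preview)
Your proposal is correct and matches the paper's approach: the paper simply declares the proposition ``straightforward'' and then unpacks its meaning as closure under subobjects and under surjective (regular-epi) images in $\DiGp$, which is precisely what you verify. Your explicit check of products and your remark that limits and regular epis are computed the same way in $\SKB$ and $\DiGp$ only make the paper's implicit argument more explicit.
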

This means that any subobject of a skew brace in $\DiGp$ is a skew brace and that, given any surjective homomorphism $f: X\onto Y$ in $\DiGp$, the digroup $Y$ is a skew brace as soon as so is $X$. In this way, any equivalence relation $R$ in $\DiGp$ on a skew brace $X$ actually lies in $\SKB$ since it determines a subobject  $R\subset X\times X$ in $\DiGp$ and, moreover, its quotient in $\SKB$ is its quotient in $\DiGp$. The first part of this last sentence implies that any normal subobject $u:U\into X$ in $\DiGp$ with $X\in \SKB$ is  normal  in $\SKB$.

We are now going to show that the normal subobjects in $\SKB$ coincide with the ideals of \cite{GV}.
\begin{proposition}\label{normal2}
	A subobject $i: (G,*,\circ )\into (K,*,\circ )$ is normal in the category $\SKB$  if and only if the three following conditions hold:\\
	$(1)$ $i: (G,*)\into (K,*)$ is normal in $\Gp$,\\
	$(2)$ $i: (G,\circ )\into (K,\circ )$ is normal in $\Gp$,\\
	$(3')$ $\lambda_x(G)=G$ for all $x\in K$.
\end{proposition}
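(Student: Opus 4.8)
The plan is to deduce this from the characterization of normality in $\DiGp$ given in Proposition~\ref{normal1}, once one knows that normality in $\SKB$ and normality in $\DiGp$ agree on objects of $\SKB$. So the first step is to record that last fact. Given $X\in\SKB$, the embedding $\SKB\hookrightarrow\DiGp$ being fully faithful and left exact both preserves and reflects the pullback squares appearing in the definition of ``normal to an equivalence relation''; moreover, as recalled just above, an equivalence relation on $X$ computed in $\DiGp$ already lies in $\SKB$, and conversely. Hence a monomorphism $i$ into $X$ is normal to an equivalence relation $R$ in $\SKB$ exactly when it is normal to $R$ in $\DiGp$, and by protomodularity this $R$ is unique if it exists. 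Consequently $i$ is normal in $\SKB$ if and only if it is normal in $\DiGp$, and Proposition~\ref{normal1} applies.

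It then remains to show that, for the inclusion $i\colon(G,*,\circ)\into(K,*,\circ)$ of skew braces, condition~$(3)$ of Proposition~\ref{normal1} is equivalent to condition~$(3')$ above, conditions~$(1)$ and~$(2)$ being literally the same in both statements. I would do this by the change of variables $y=x\circ u$. Fix $x\in K$. As $u$ runs over $K$, so does $y=x\circ u$, bijectively, with inverse $u=x^{-\circ}\circ y$; and by the very definition of $\lambda$ one has $x^{-*}*y=x^{-*}*(x\circ u)=\lambda_x(u)$. So condition~$(3)$, read with $y=x\circ u$, says precisely that $\lambda_x(u)\in G$ if and only if $u\in G$, for every $x\in K$ and every $u\in K$, i.e.\ $\lambda_x^{-1}(G)=G$ for all $x\in K$. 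Since each $\lambda_x$ is an automorphism of $(K,*)$, hence in particular a bijection of $K$, this is the same as $\lambda_x(G)=G$ for all $x\in K$, which is $(3')$. Reading the computation backwards gives the converse, so $(3)\Leftrightarrow(3')$, and the proof is complete.

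I expect the only delicate point to be the bookkeeping in the first step --- genuinely transferring normality between $\SKB$ and $\DiGp$ --- rather than the algebra, since once the substitution $y=x\circ u$ is made, recognizing $x^{-*}*y$ as $\lambda_x(u)$ and hence condition~$(3)$ as ``$\lambda_x^{-1}(G)=G$'' is a one-line verification. One should also note that no extra hypothesis on $G$ is needed to pass from $\lambda_x^{-1}(G)=G$ to $\lambda_x(G)=G$, precisely because $\lambda_x$ is a bijection; and that the equivalence $(3)\Leftrightarrow(3')$ in fact holds with no assumption on $i$ at all, conditions~$(1)$ and~$(2)$ being carried along only to match Proposition~\ref{normal1}.
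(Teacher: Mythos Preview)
Your proof is correct and follows essentially the same strategy as the paper: reduce to Proposition~\ref{normal1} via the transfer of normality along $\SKB\hookrightarrow\DiGp$, then show $(3)\Leftrightarrow(3')$ by the substitution $y=x\circ u$ which turns $x^{-*}*y$ into $\lambda_x(u)$. The only cosmetic difference is that the paper splits the biconditional in~$(3)$ into its two implications, treating one with $y=x\circ u$ and the other with $y=x*u$ (via identity~(\ref{doublech})) to obtain $\lambda_x(G)\subset G$ and $\lambda_{x^{-\circ}}(G)\subset G$ separately, whereas you let $u$ range over all of $K$ and read off $\lambda_x^{-1}(G)=G$ in one stroke; your packaging is marginally cleaner but the content is the same.
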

\proof
Suppose (1) and (2). We are going to show $(3)\iff (3')$, with $(3)$ given in Proposition \ref{normal1}.\\
(i) $x^{-\circ}\circ y\in G\Rightarrow x^{-*}*y\in G$ if and only if $\lambda_{x}(G)\subset G$, setting $y=x\circ u, \; u\in G$.\\
(ii) from (\ref{doublech}):
 $x^{-*}*y\in G \Rightarrow x^{-\circ}\circ y\in G$ if and only if $\lambda_{x^{-\circ}}(G)\subset G$, setting $y=x*u, \; u\in G$.\\
Finally $\lambda_{x}(G)\subset G$ for all $x$ is equivalent to $\lambda_{x}(G)=G$.
\endproof
\begin{corollary}
	A subobject $i: (G,*,\circ )\into (K,*,\circ )$ is normal in the category $\SKB$  if and only if it is an ideal in the sense of \cite{GV}, namely is such that:\\
	1) $i: (G,\circ )\into (K,\circ )$ is normal, 2) $G*a=a*G$ for all $a\in K$, 3) $\lambda_{a}(G)\subset G$ for all $a\in K$.
\end{corollary}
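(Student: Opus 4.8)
The plan is to read the statement off directly from Proposition \ref{normal2}: the corollary is merely the translation of its three clauses $(1)$, $(2)$, $(3')$ into the vocabulary of \cite{GV}, so essentially no new mathematical content is involved. To begin, clause $(2)$ of Proposition \ref{normal2} --- that $i\colon(G,\circ)\into(K,\circ)$ is normal in $\Gp$ --- is verbatim clause $1)$ of the present statement, so nothing has to be done there.

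Next I would reformulate clause $(1)$ of Proposition \ref{normal2}. To say that $i\colon(G,*)\into(K,*)$ is a normal subgroup means, elementwise, that $a*G*a^{-*}\subseteq G$ for all $a\in K$, and for an arbitrary group this is well known to be equivalent to the set-theoretic equality $a*G=G*a$ for all $a\in K$; this is exactly clause $2)$. This is a one-line group-theoretic remark and carries no difficulty.

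The only point that deserves an argument is the equivalence of clause $(3')$ of Proposition \ref{normal2}, namely $\lambda_x(G)=G$ for all $x\in K$, with clause $3)$, namely $\lambda_x(G)\subseteq G$ for all $x\in K$; and in fact this equivalence was already established at the end of the proof of Proposition \ref{normal2}. I would simply recall why: by the ``very important'' Proposition on $\lambda$ above, $x\mapsto\lambda_x$ underlies a group homomorphism $(K,\circ)\to\Aut(K,*)$, whence $\lambda_x\circ\lambda_{x^{-\circ}}=\lambda_{x\circ x^{-\circ}}=\lambda_1=\id_K$; so if $\lambda_x(G)\subseteq G$ for every $x\in K$, then applying $\lambda_x$ to the inclusion $\lambda_{x^{-\circ}}(G)\subseteq G$ yields $G=\lambda_x\bigl(\lambda_{x^{-\circ}}(G)\bigr)\subseteq\lambda_x(G)\subseteq G$, forcing $\lambda_x(G)=G$. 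The converse implication being trivial, clauses $(3')$ and $3)$ coincide under $1)$ and $2)$. There is no real obstacle here: the whole substance sits in Proposition \ref{normal2}, and the corollary is just its reformulation through the notion of ideal of \cite{GV}; the only care needed is to check that the definition of ``ideal'' being used --- normality for $\circ$, the equality $G*a=a*G$ for $*$, and $\lambda$-invariance --- matches the three clauses, which the preceding remarks do.
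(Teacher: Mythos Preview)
Your proposal is correct and matches the paper's own proof, which simply reads ``Straightforward.'' You have merely made explicit the translation between the three clauses of Proposition~\ref{normal2} and the three defining conditions of an ideal in \cite{GV}; the only minor quibble is that the equivalence of $(3')$ and $3)$ does not actually require clauses $1)$ and $2)$, only the fact that $x\mapsto\lambda_x$ is a group homomorphism into $\Aut(K,*)$.
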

\proof
Straightforward. 
\endproof

Being a variety in the sense of Universal Algebra, $\SKB$ is finitely cocomplete; accordingly it has binary sums (called coproducts as well). So, $\SKB$ is a semi-abelian category according to the definition introduced in \cite{JMT}:
\begin{definition}
{\rm 	A pointed category $\EE$ is said to be {\em semi-abelian} when it is protomodular, exact and has finite sums.}
\end{definition}
From the same \cite{JMT}, let us recall the following observation which explains the choice of the terminology: a pointed category $\EE$ is abelian if and only if both $\EE$ and $\EE^{op}$ are semi-abelian.

\subsection{Internal skew braces}

Given any category $\EE$, the notion of internal group, digroup and skew brace is straightforward, determining the categories $\Gp\EE$,  $\DiGp\EE$ and  $\SKB\EE$. Since $\Gp\EE$ is protomodular, so are the two others. An important case is produced with $\EE=Top$ the category of topological spaces. Although $Top$ is not a regular category, so is the category $\Gp Top$, the regular epimorphisms being the  open surjective homomorphisms. So $\Gp Top$ is homological but not semi-abelian. 

Now let $f: X\to Y$ be any map in $\DiGp Top$. Let us show that $R[f]$ has a quotient in $\DiGp Top$. Take its quotient $q_{R[f]}: X \onto Q_f$ in $\DiGp$, then endow $Q_f$ with the quotient topology with respect to $R[f]$; then $q_{R[f]}$ is an open surjective homomorphism since so is $U_0(q_{R[f]})$. Accordingly, a regular epimorphism in $\DiGp Top$ is again an open  surjective homomorphism. Moreover this same functor $U_0: \DiGp Top \to \Gp Top$ being left exact and reflecting the homeomorphic isomorphisms, it reflects the regular epimorphisms; so, these regular epimorphisms in $\DiGp Top$ are stable under pullbacks. Accordingly the category $\DiGp Top$ is regular. Similarly the category $\SKB Top$ is homological as well, without being semi-abelian. As any category of topological semi-abelian algebras,  both $\DiGp Top$ and $\SKB Top$ are finitely cocomplete, see \cite{BC}.

\section{Skew braces and their commutators}

\subsection{Protomodular aspects}

\subsubsection{Commutative pairs of subobjects, abelian objects}\label{abob}

Given any pointed category $\EE$, the protomodular axiom applies to the following specific downward pullback:
$$
\xymatrix@=20pt{
	X  \ar@{ >->}[rr]^{r_X} \ar@<-4pt>[d]_{} && X\times Y \ar@<-4pt>[d]_{p_Y}  && \\
	1 \ar@{ >->}[rr]_{0_Y} \ar@{ >->}[u]_{0_{K}} & & Y \ar@{ >->}[u]_{l_Y} }
$$
where the monomorphisms are the canonical inclusions. This is the definition of a \emph{unital category} \cite{B2}. In this kind of categories there is an intrisic notion of \emph{commutative pair of subobjects}:
\begin{definition} {\rm 
	Let $\EE$ be a unital category.
	Given a pair $(u,v)$ of subobjects of $X$, we say that the subobjects $u$ and $v$ {\em cooperate} (or {\em commute}) when there is a (necessarily unique) map $\varphi $, called the {\em cooperator} of the pair $(u,v)$, making the following diagram commute:
	$$
	\xymatrix@=20pt{
		&  U \ar@{ >->}[dl]_{l_U}  \ar@{ >->}[dr]^{u}  & & \\
		U \times V \ar@{.>}[rr]_{\varphi} &&  X   \\
		& V \ar@{ >->}[ul]^{r_V}  \ar@{ >->}[ur]_{v}  & & 
	}
	$$
	 We denote this situation by $[u,v]=0$. A subobject $u:U\into Y$ is {\em central} when $[u,1_{X}]=0$. An object $X$ is {\em commutative}  when $[1_{X},1_{X}]=0$.}
\end{definition}
Clearly $[1_{X},1_{X}]=0$ gives $X$ a structure of internal unitary magma, which, $\EE$ being unital, is necessarily underlying an internal commutative monoid structure. When $\EE$ is protomodular, this is actually an internal abelian group structure, so that we call $X$ an abelian object \cite{B1}. This gives rise to a fully faithful subcategory $Ab(\EE)\hookrightarrow  \EE$, which is additive and stable under finite limits in $\EE$. From that we can derive:
\begin{proposition}\cite{B1}
A pointed protomodular category $\EE$ is additive if and only if any monomorphism is normal.
\end{proposition}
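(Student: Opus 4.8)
The statement to prove is: \emph{A pointed protomodular category $\EE$ is additive if and only if any monomorphism is normal.}

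\medskip

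The plan is to prove the two implications separately, using the characterizations already collected in the excerpt. Recall that additivity of a pointed category was characterized earlier via split epimorphisms: $\EE$ is additive iff, for every split epimorphism $f\colon X\splito Y$ with section $s$, the canonical square with vertices $\Ker f$, $X$, $1$, $Y$ is both a (downward) pullback and an (upward) pushout; equivalently $X\cong Y\oplus\Ker f$. Recall also from the section on normal monomorphisms that in a protomodular category a monomorphism $u\colon U\into X$ is normal iff it is normal to some (necessarily unique) equivalence relation, and that every kernel map $k_f$ is normal to $R[f]$; so ``every monomorphism is normal'' is equivalent to ``every monomorphism is a kernel''.

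\medskip

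For the forward direction, suppose $\EE$ is additive. Given any monomorphism $u\colon U\into X$, I want to exhibit $u$ as a kernel. In an additive category one has biproducts, and the natural candidate is to form the cokernel $q\colon X\to X/U$ of $u$ and check $u=\ker q$. The clean route, staying within what the excerpt gives us, is: additive $\Rightarrow$ $\EE$ is abelian-like enough that $Ab(\EE)=\EE$; more precisely, in a pointed protomodular \emph{additive} category every object is abelian (from the additivity characterization, each $X$ is a direct summand situation making $[1_X,1_X]=0$), and then one invokes that in an additive protomodular category the split short five lemma plus the biproduct decomposition force every mono to split off as a kernel. Concretely: for $u\colon U\into X$, consider the pushout of $u$ along $0_U\colon 1\to U$... rather, the cokernel $q$; additivity gives that $U\to X\to X/U$ is a short exact sequence with $u=\ker q$. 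I expect this direction to require a short argument that cokernels exist and behave well, which additive + finite completeness handles (an additive category with finite limits has finite biproducts, hence all finite colimits indexed by... — careful: finite completeness plus additivity does give kernels and cokernels since the latter can be built as $\mathrm{coker}(u) = $ cokernel via the biproduct; this is the step to state carefully rather than grind).

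\medskip

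For the converse, suppose every monomorphism in $\EE$ is normal; I must show $\EE$ is additive, i.e. verify the pushout condition on the canonical split-epi square. Take a split epimorphism $f\colon X\splito Y$, $fs=1_Y$, with kernel inclusion $k_f\colon \Ker f\into X$. By protomodularity the pair $(k_f,s)$ is jointly strongly epic. Now the section $s\colon Y\into X$ is a monomorphism, hence by hypothesis it is normal, so $s=\ker p$ for some $p\colon X\to Z$. The idea is that $(p,f)$ then realizes $X$ as (a subobject of, hence by joint strong epimorphism equal to) the product $\Ker f\times Y$: the map $(p,f)\colon X\to Z\times Y$... — better, use that $k_f$ and $s$ cooperate. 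Here is the crux: since $s=\ker p$, the restriction $p\circ k_f\colon \Ker f\to Z$ together with the obvious data should produce a cooperator $\varphi\colon \Ker f\times Y\to X$ for the pair $(k_f,s)$; joint strong epimorphism of $(k_f,s)$ then forces $\varphi$ to be a split epi, and a standard unital/protomodular argument (cooperators in a protomodular category are isomorphisms when the two subobjects jointly cover) gives $X\cong \Ker f\times Y$, which is exactly the required pushout/biproduct property. Establishing that cooperator — that is, checking $p\,k_f$ and $f$ assemble to a well-defined map out of the product that restricts correctly — is where the normality of $s$ does its real work, and this is the step I expect to be the main obstacle. Once $[k_f,s]=0$ is in hand, protomodularity upgrades the cooperator to an isomorphism $\Ker f\times Y\xrightarrow{\ \sim\ }X$, and since this holds for \emph{every} split epi, $\EE$ is additive. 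Finally, to get the full strength ``additive'' rather than merely ``every object has an abelian group structure'', one notes that the natural biproduct structure obtained is automatically functorial and compatible, as in the unital $\Rightarrow$ commutative monoid $\Rightarrow$ (protomodular) abelian group passage described just before the proposition.
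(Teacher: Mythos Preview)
The paper does not prove this proposition at all; it is merely recalled from \cite{B1} with no argument given, so there is no ``paper's own proof'' to compare against. That said, your attempt has genuine gaps worth flagging.

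First, your preliminary claim that in a protomodular category ``every monomorphism is normal'' is equivalent to ``every monomorphism is a kernel'' is unjustified and in fact false in general: a monomorphism normal to an equivalence relation $R$ is a kernel only when $R$ is effective (i.e.\ a kernel pair), and nothing in the hypotheses guarantees exactness. You lean on this equivalence in both directions, so the error propagates.

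Second, in the forward direction you need the cokernel $X\to X/U$ to exist and to satisfy $u=\ker(\operatorname{coker} u)$; finite completeness plus additivity does \emph{not} give cokernels, and you acknowledge this yourself. The clean route avoids cokernels entirely: given $u\colon U\into X$ in an additive $\EE$, pull $u$ back along the difference map $d=p_1-p_2\colon X\times X\to X$ to obtain an equivalence relation $R=d^{-1}(U)$ on $X$, and check directly from the pullback description that $u$ is normal to $R$.

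Third, your converse is pointed in a reasonable direction but the crucial step---producing a cooperator for $(k_f,s)$ from the normality of $s$---is left as ``should produce'' without an actual construction, and as noted you cannot assume $s=\ker p$. The argument in \cite{B1} proceeds differently: one shows that in a pointed protomodular category an object $X$ is abelian precisely when the diagonal $s_0\colon X\into X\times X$ is a normal monomorphism. Granting this, if every monomorphism is normal then every diagonal is normal, hence every object is abelian, hence $\EE=Ab(\EE)$, which is additive by the remark immediately preceding the proposition.
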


\subsubsection{Connected pairs $(R,S)$ of equivalence relations}

Since a protomodular category is necessarily a Mal'tsev one, we can transfer the following notions. Given any pair $(R,S)$ of equivalence relations  on the  object $X$ in $\EE$, take the following rightward and downward pullback:
$$
\xymatrix@=30pt
{ 
	R {\overrightarrow\times}_{\!\! X} S \ar[r]^{p_S} \ar[d]_{p_R}  &  S  \ar[d]_{d_0^S}   \ar@<+1,ex>@{ >->}[l]^{r_S} \\
	R \ar[r]^{d_1^R} \ar@<-1,ex>@{ >->}[u]_{l_R} & X \ar@<+1,ex>@{ >->}[l]^{s_0^R} \ar@<-1,ex>@{ >->}[u]_{s_0^S}
}
$$
where $l_R$ and $r_S$ are the sections induced by the maps $s_0^R$ and $s_0^S$. Let us recall the following definition from \cite{BG1}:
\begin{definition} {\rm 
	In a Mal'tsev category $\EE$, the pair $(R,S)$ is said to be {\em connected} when there is a (necessarily unique) morphism
	$$
	p : R {\overrightarrow\times}_{\!\! X} S \rightarrow X,\; xRySz\mapsto p(xRySz)
	$$
	such that $pr_S=d_1^S$ and $pl_R=d_0^R$, namely such that the following identities hold: $p(xRySy)=x$ and $p(yRySz)=z$. This morphism $p$ is then called the \emph{connector} of the pair, and we denote the situation by $[R,S]=0$.}
\end{definition}
From \cite{BG2}, let us recall that:
\begin{lemma}\label{func}
Let $\EE$ be a Mal'tsev category, $f: X\to Y$ any map, $(R,S)$ any pair of equivalence relations on $X$, $(\bar R,\bar S)$ any pair of equivalence relations on $Y$ such that $R\subset f^{-1}(\bar R)$ and  $S\subset f^{-1}(\bar S)$. Suppose moreover that $[R,S]=0$ and $[\bar R,\bar S]=0$. Then the following diagram necessarily commutes:
$$
\xymatrix@=30pt
{ 
	R {\overrightarrow\times}_{\!\! X} S \ar[rr]^{\tilde f} \ar[d]_{p_{(R,S)}}  && \bar R {\overrightarrow\times}_{\!\! Y} \bar S  \ar[d]^{p_{(\bar R,\bar S)}}    \\
	X \ar[rr]_{f}  && Y 
}
$$
where $\tilde f$ is the natural factorization induced by $f^{-1}(\bar R)$ and  $S\subset f^{-1}(\bar S)$.
\end{lemma}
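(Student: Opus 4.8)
The plan is to reduce the commutativity of the square to the universal property of the connector, bringing the Mal'tsev hypothesis in only at the last step. First I would unwind $\tilde f$: since $R\subset f^{-1}(\bar R)$, the map $f$ restricts to a morphism of equivalence relations $f_R\colon R\to\bar R$ with $d_i^{\bar R}\circ f_R=f\circ d_i^R$ ($i=0,1$) and $f_R\circ s_0^R=s_0^{\bar R}\circ f$, and symmetrically $S\subset f^{-1}(\bar S)$ gives $f_S\colon S\to\bar S$. Since $\tilde f$ is by construction the morphism of pullbacks induced by the pair $(f_R,f_S)$, it is compatible with the canonical sections, i.e.\ $\tilde f\circ l_R=l_{\bar R}\circ f_R$ and $\tilde f\circ r_S=r_{\bar S}\circ f_S$; I would verify each of these by postcomposing with the jointly monic projections $p_{\bar R},p_{\bar S}$ of $\bar R{\overrightarrow\times}_{\!\! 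Y}\bar S$ and using the displayed equations for $f_R$ and $f_S$.

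Next I would put $\phi:=f\circ p_{(R,S)}$ and $\psi:=p_{(\bar R,\bar S)}\circ\tilde f$ for the two composites $R{\overrightarrow\times}_{\!\! X}S\to Y$ that bound the square, and check that they agree after precomposition with $l_R$ and with $r_S$. For $l_R$: the defining equation $p_{(R,S)}\circ l_R=d_0^R$ gives $\phi\circ l_R=f\circ d_0^R$, whereas $\psi\circ l_R=p_{(\bar R,\bar S)}\circ l_{\bar R}\circ f_R=d_0^{\bar R}\circ f_R=f\circ d_0^R$. For $r_S$ the computation is symmetric, using $p_{(R,S)}\circ r_S=d_1^S$ and $p_{(\bar R,\bar S)}\circ r_{\bar S}=d_1^{\bar S}$, and gives $\phi\circ r_S=f\circ d_1^S=\psi\circ r_S$. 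So $\phi$ and $\psi$ coincide on $l_R$ and on $r_S$.

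Finally I would invoke the Mal'tsev hypothesis: in a Mal'tsev category the pair $(l_R,r_S)$ is jointly strongly epic — equivalently, $R{\overrightarrow\times}_{\!\! X}S$ is the product of $R$ and $S$ in the unital fibre $\Pt_X(\EE)$ and so is generated by the images of $l_R$ and $r_S$ (in a Mal'tsev variety this is visible by hand: writing elements as triples, $(a,b,c)=q\bigl((a,b,b),(b,b,b),(b,b,c)\bigr)$ for the Mal'tsev term $q$, with $(a,b,b)$ in the image of $l_R$ and $(b,b,c)$ in the image of $r_S$). Since a jointly strongly epic pair is jointly epic, this forces $\phi=\psi$, which is precisely the commutativity of the square. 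Everything before this point is formal diagram chasing; the one genuine ingredient — and the step I expect to be the main obstacle — is the joint strong epimorphy of $(l_R,r_S)$, which I would either quote from \cite{BG1} (or \cite{BG2}) or establish by the short fibrewise argument just indicated.
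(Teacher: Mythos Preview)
Your argument is correct and is exactly the standard one. The paper itself does not supply a proof of this lemma: it is simply recalled from \cite{BG2}, so there is nothing in the text to compare against beyond the reference. What you have written is essentially the proof one finds there: the joint strong epimorphy of the pair $(l_R,r_S)$ in a Mal'tsev category is precisely the fact that guarantees \emph{uniqueness} of the connector in the definition of $[R,S]=0$, and you are reusing that same uniqueness to force $f\circ p_{(R,S)}=p_{(\bar R,\bar S)}\circ\tilde f$ once the two composites have been checked on $l_R$ and $r_S$. The verification of $\tilde f\circ l_R=l_{\bar R}\circ f_R$ and $\tilde f\circ r_S=r_{\bar S}\circ f_S$ via the jointly monic projections is routine and correct.
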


A pointed Mal'tsev category is necessarily unital. From \cite{BG1}, in any pointed Mal'sev category $\EE$, we have necessarily
\begin{equation}[R,S]=0 \;\; \Rightarrow \;\;\ [I_R,I_S]=0\label{h=s}\end{equation}
In this way, the ``Smith commutation" \cite{S1976} implies the ``Huq commutation" \cite{H}. 

\subsection{Huq=Smith}

The converse is not necessarily true, even if $\EE$ is pointed protomodular, see Proposition \ref{notsh} below. When this is the case, we say that $\EE$ satisfies the {\rm (Huq=Smith)} condition. Any pointed strongly protomodular category satisfies {\rm (Huq=Smith)}, see \cite{BG1}. {\rm (Huq=Smith)} is true for $\Gp$ by the following straighforward:
\begin{proposition}\label{SHGp}
	Let $(R,S)$ be a pair of equivalence relations  in $\Gp$ on the group $(G,*)$. The following conditions are equivalent:\\
	{\rm (1)} $[I_R,I_S]=0$;\\
	{\rm (2)} $p(x,y,z)=x*y^{-1}*z$ defines a group homomorphism $p: G\times G \times G \to G$;\\
	{\rm (3)} $[R,S]=0$. 
\end{proposition}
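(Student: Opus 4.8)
The plan is to prove the three equivalences $(1)\Leftrightarrow(2)\Leftrightarrow(3)$ for a pair $(R,S)$ of equivalence relations on a group $(G,*)$, exploiting that every equivalence relation in $\Gp$ is of the form $R[q]$ for a quotient map $q$, so that $I_R$ and $I_S$ are the normal subgroups corresponding to $R$ and $S$. Write $N=I_R$ and $M=I_S$; then $xRy$ iff $x*y^{-1}\in N$ (equivalently $y^{-1}*x\in N$, since $N$ is normal), and similarly for $S$ with $M$.

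First I would handle $(1)\Leftrightarrow(2)$. The Huq condition $[N,M]=0$ in $\Gp$ says exactly that $N$ and $M$ commute elementwise: $n*m=m*n$ for all $n\in N$, $m\in M$; the cooperator $N\times M\to G$ is $(n,m)\mapsto n*m$, and its existence as a group homomorphism is equivalent to elementwise commutation. So I must show that $(n,m)\mapsto n*m$ commuting is equivalent to the map $p(x,y,z)=x*y^{-1}*z$ being a group homomorphism $G\times G\times G\to G$. For the forward direction: given arbitrary $(x,y,z),(x',y',z')$, compute $p(x,y,z)*p(x',y',z')=x*y^{-1}*z*x'*y'^{-1}*z'$ and compare with $p(xx',yy',zz')=x*x'*y'^{-1}*y^{-1}*z*z'$; the difference is measured by the commutator of $z*x'*y'^{-1}$-type terms, and after massaging one finds the obstruction is precisely a commutator of an element of the form (something in $N$-coset direction) with (something in $M$-coset direction)—here one uses that $p$ descends to the quotients, i.e. $p$ being well-defined on cosets forces $y^{-1}*z$ and $x*y^{-1}$ to range over $N$ and $M$ respectively. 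The cleanest route: note $p$ restricted appropriately recovers the cooperator, and conversely expand the homomorphism condition on generic triples and collect terms to isolate $[n,m]=e$. I would present this as a short direct computation.

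Next, $(2)\Leftrightarrow(3)$. By definition, $[R,S]=0$ means there is a connector $p\colon R\overrightarrow\times_X S\to X$ with $p(xRySy)=x$ and $p(yRySz)=z$; in $\Gp$ the obvious candidate is $p(x,y,z)=x*y^{-1}*z$ restricted to the subset $R\overrightarrow\times_X S=\{(x,y,z)\mid xRy,\ yRz\}$ (using that $R,S$ here denote the relations; one writes $xRySz$ for $(x,y)\in R$, $(y,z)\in S$). The normalization conditions $p(x,y,y)=x$ and $p(y,y,z)=z$ hold automatically. So the only content is that this $p$, defined a priori only on the subgroup $R\overrightarrow\times_X S\le G\times G\times G$, is a group homomorphism; and condition (2) says exactly that the same formula is a homomorphism on all of $G\times G\times G$, which trivially restricts to a homomorphism on the subgroup. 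Thus $(2)\Rightarrow(3)$ is immediate. For $(3)\Rightarrow(2)$ I would use Lemma \ref{func} (or argue directly): a connector on $R\overrightarrow\times_X S$ in $\Gp$ forces $N$ and $M$ to commute elementwise—take $x=n\in N$, $y=e$, $z=e$ (so $(n,e,e)\in R\overrightarrow\times_X S$) and $(e,e,m)$ with $m\in M$; multiplying these in the subgroup and applying the homomorphism $p$ yields $n*m=p(n,e,m)=p((n,e,e)*(e,e,m))$, while approaching via $(e,e,m)*(n,e,e)$ gives $m*n$, forcing $n*m=m*n$, hence (1), which by the already-proven $(1)\Leftrightarrow(2)$ gives (2).

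The main obstacle I expect is purely bookkeeping in the $(1)\Leftrightarrow(2)$ step: untangling the six-fold product $x*y^{-1}*z*x'*y'^{-1}*z'$ and showing the failure of homomorphy is governed exactly by the single commutator $[N,M]$, being careful that $y^{-1}z$ and $xy^{-1}$ do land in the relevant normal subgroups (this uses that $R=R[q_R]$ so membership in $R$ is controlled by $N=\Ker q_R$, and likewise for $S$). Everything else is formal: the identification of Huq commutation with elementwise commutation of normal subgroups is standard, the connector candidate in $\Gp$ is forced, and the passage between the subgroup $R\overrightarrow\times_X S$ and the full product $G^3$ is the only subtlety, handled by plugging in the distinguished triples above. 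I would therefore structure the proof as: (a) set up $N,M$ and recall Huq $=$ elementwise commutation; (b) the $(1)\Rightarrow(2)$ and $(2)\Rightarrow(1)$ computation; (c) the trivial $(2)\Rightarrow(3)$; (d) the $(3)\Rightarrow(1)$ plug-in argument, then close the cycle.
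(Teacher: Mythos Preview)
The paper gives no proof of this proposition beyond calling it ``straightforward'', so there is nothing to compare against; the question is whether your argument stands on its own. The overall architecture is right, but there is a genuine gap in how you read condition~(2).

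Taken literally, (2) asks that $p(x,y,z)=x*y^{-1}*z$ be a homomorphism on all of $G\times G\times G$. That forces $G$ to be abelian: from $p\bigl((e,e,z)(x,e,e)\bigr)=p(e,e,z)\,p(x,e,e)$ one gets $x*z=z*x$ for arbitrary $x,z$. Hence $(1)\Rightarrow(2)$ is simply false under this reading (take $R=S=\Delta_G$ on any nonabelian $G$: then $I_R=I_S=\{e\}$, so $[I_R,I_S]=0$, yet $p$ is not a homomorphism on $G^3$). The intended domain of $p$ is the subgroup $R\overrightarrow\times_X S\subset G^3$; this is how the proposition is actually used in the very next two proofs in the paper, where $p$ is only ever applied to triples $xRySz$. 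Your write-up oscillates between the two readings: you begin $(1)\Rightarrow(2)$ with ``given arbitrary $(x,y,z),(x',y',z')$'' in $G^3$, but then invoke $x*y^{-1}\in N$ and $y^{-1}*z\in M$, which are precisely the constraints cutting out $R\overrightarrow\times_X S$. As written, the step fails.

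Once you fix the domain, your computation goes through cleanly: for $(x,y,z),(x',y',z')\in R\overrightarrow\times_X S$ the identity $p(xx',yy',zz')=p(x,y,z)*p(x',y',z')$ reduces, after cancelling $x$ on the left and $z'$ on the right, to $(x'*y'^{-1})*(y^{-1}*z)=(y^{-1}*z)*(x'*y'^{-1})$, with $x'*y'^{-1}\in N$ and $y^{-1}*z\in M$; this holds for all such triples iff $[N,M]=0$. With that correction $(1)\Leftrightarrow(2)$ is done, $(2)\Rightarrow(3)$ is immediate (the connector candidate is already a homomorphism on the correct domain), and your $(3)\Rightarrow(1)$ plug-in with $(n,e,e)$ and $(e,e,m)$ is correct. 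Two minor slips: you wrote $R\overrightarrow\times_X S=\{(x,y,z)\mid xRy,\ yRz\}$, where the second clause should be $ySz$; and in your $(3)\Rightarrow(1)$ argument you should not assume the connector is given by the formula $x*y^{-1}*z$ but deduce $p(n,e,e)=n$ and $p(e,e,m)=m$ from the connector axioms, which you implicitly do.
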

\begin{proposition}\label{notsh}
	The category $\DiGp$ of digroups does not satisfy {\rm (Huq=Smith)}. 
\end{proposition}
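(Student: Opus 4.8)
The plan is to exhibit an explicit counterexample, i.e.\ a single digroup $(G,*,\circ)$ together with a pair $(R,S)$ of equivalence relations on it for which the Huq condition $[I_R,I_S]=0$ holds while the Smith condition $[R,S]=0$ fails. The natural place to look is the known Huq$\ne$Smith phenomenon for near-rings recalled in the introduction (reference \cite{Commutators}), since a near-ring $(N,+,\cdot)$ with its additive and multiplicative structure is closely related to a digroup: the obstruction should already appear on a small finite example. Concretely, I would try the smallest near-ring in which Huq and Smith diverge, transport it to a digroup $(G,*,\circ)$ where $(G,*)$ is the additive group and $(G,\circ)$ is a group structure built from the near-ring multiplication (or, more safely, take $(G,\circ)=(G,*)$ itself so that $G$ is the digroup $(G,*,*)$, and encode the failure purely in the choice of $R$ and $S$ as equivalence relations that are ``the same'' for $*$ but whose connector would have to respect a twisted operation).

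The key steps, in order, are: (i) pin down the digroup $G$ and the two equivalence relations $R,S$ — equivalently, by protomodularity and the fact that in a pointed protomodular category normal subobjects correspond to equivalence relations, pick two normal subobjects $I_R, I_S \into G$ (ideals, using Proposition \ref{normal1}); (ii) verify the Huq side $[I_R,I_S]=0$, i.e.\ produce the cooperator $\varphi\colon I_R\times I_S\to G$, which amounts to checking that the elements of $I_R$ and those of $I_S$ commute with respect to \emph{both} group operations and that the two ``insertions'' assemble into a digroup homomorphism; (iii) verify the Smith side $[R,S]=0$ fails, i.e.\ show that the only candidate connector $p\colon R\overrightarrow\times_{\!\!G} S \to G$ forced by the Mal'tsev formula (in group terms, $p(xRySz)$ must be $x*y^{-*}*z$ on the $*$-level and $x\circ y^{-\circ}\circ z$ on the $\circ$-level by Proposition \ref{SHGp} applied to each forgetful image) is \emph{not well defined} or is not a digroup homomorphism, because the two formulas disagree or fail condition (3) of Proposition \ref{normal1}; (iv) conclude that $\DiGp$ does not satisfy (Huq=Smith).

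The main obstacle I expect is step (iii): one must show a genuine obstruction rather than merely that the naive connector formula looks different on the two levels. The cleanest way is to use Lemma \ref{func} (functoriality of connectors in a Mal'tsev category) or Proposition \ref{SHGp}: applying the forgetful functor $U_0\colon \DiGp\to\Gp$ to a hypothetical connector $p$ for $(R,S)$ would yield a connector for $(U_0R,U_0S)$ in $\Gp$, forcing $U_0(p)(x,y,z)=x*y^{-*}*z$; likewise $U_1(p)(x,y,z)=x\circ y^{-\circ}\circ z$. A digroup connector is a single underlying map that is simultaneously a homomorphism for $*$ and for $\circ$, so it must satisfy $x*y^{-*}*z = x\circ y^{-\circ}\circ z$ for all $(x,y,z)\in R\overrightarrow\times_{\!\!G} S$ — and in a suitably chosen example (one where $*\ne\circ$ but the Huq cooperator still exists because it only needs to be defined on the much smaller domain $I_R\times I_S$ rather than on the full $R\overrightarrow\times_{\!\!G} S$) this identity can be made to fail. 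So the real content is to choose $G$, $R$, $S$ so that the domains genuinely differ in size: $I_R\times I_S$ is small enough that the $*$-cooperator and $\circ$-cooperator coincide there, while $R\overrightarrow\times_{\!\!G} S$ is large enough to detect the discrepancy between $*$ and $\circ$. I would search among digroups of the form $(G,*,\circ)$ with $|G|$ small (say a group of order $8$ or $16$, or the digroup attached to the near-ring counterexample of \cite{Commutators}), with $R$ and $S$ taken as $R[f]$ and $R[g]$ for two well-chosen quotient maps; a short finite check then settles both sides.

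Finally, once the example is in hand I would remark that this is exactly the expected behaviour — the digroup structure carries \emph{two independent} protomodular structures which need not be compatible on connectors, whereas (as will be shown in the sequel) the extra brace axiom (B) forces precisely the compatibility that makes Huq$=$Smith hold in $\SKB$. This contrast motivates the Huq$=$Smith theorem for skew braces that follows.
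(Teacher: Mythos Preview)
Your analysis of the \emph{mechanism} is exactly right and matches the paper: if a connector $p$ for $(R,S)$ existed in $\DiGp$, then applying $U_0$ and $U_1$ and using Proposition~\ref{SHGp} forces $p(x,y,z)=x*y^{-*}*z=x\circ y^{-\circ}\circ z$ on $R\overrightarrow\times_{\!\!X}S$, so a counterexample is any digroup with a pair $(R,S)$ on which these two Mal'tsev expressions disagree while $[I_R,I_S]=0$ still holds. This is precisely the argument the paper runs.

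The genuine gap is that you never actually produce the example; the entire content of the proposition is the \emph{existence} of one, and your two concrete suggestions are both problematic. The digroup $(G,*,*)$ cannot work, since there the two Mal'tsev formulas coincide identically. And a near-ring $(N,+,\cdot)$ is not a digroup: the multiplication is not a group operation, so ``transporting'' the counterexample of \cite{Commutators} is not the routine step you suggest --- it would require building a genuine second group law out of the near-ring data, which is nowhere indicated.

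The paper's counterexample (taken from \cite{Normal}) is both explicit and elementary, and you should simply give it. Take any abelian group $(A,+)$ containing an element $a$ with $-a\neq a$. Let $\theta\colon A\times A\to A\times A$ be the involution that swaps $(a,a)\leftrightarrow(-a,a)$ and fixes everything else, and define $(x,z)\circ(x',z')=\theta\bigl(\theta(x,z)+\theta(x',z')\bigr)$. Since $\pi_2\theta=\pi_2$, the second projection $\pi\colon(A\times A,+,\circ)\to(A,+,+)$ is a digroup homomorphism with kernel $\iota_A\colon(A,+,+)\hookrightarrow(A\times A,+,\circ)$, $\iota_A(x)=(x,0)$. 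Commutativity of $+$ gives $[\iota_A,\iota_A]=0$ (Huq). For Smith, take $R=S=R[\pi]$ and compare the two formulas on the triple $(x,a)R[\pi](a,a)R[\pi](x'',a)$ with $x,x''\notin\{a,-a\}$: one computes $(x,a)\circ(a,a)^{-\circ}\circ(x'',a)=(x+a+x'',a)$, whereas $(x,a)-(a,a)+(x'',a)=(x-a+x'',a)$, and these differ because $a\neq -a$. Hence $[R[\pi],R[\pi]]\neq 0$.
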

\proof
We can use the counterexample introduced in \cite{Normal} for another purpose. Start with an abelian group $(A,+)$ and an object $a$ such that $-a\neq a$. Then define $\theta: A\times A \to A\times A$ as the involutive bijection which leaves fixed any object $(x,y)$ except $(a,a)$ which is exchanged with $(-a,a)$. Then defined the group structure $(A\times A,\circ)$ on $A\times A$ as the transformed along $\theta$ of $(A\times A,+)$. So, we get:
$$ (x,z)\circ(x',z')=\theta(\theta(x,z)+\theta(x',z'))$$
Clearly we have $(a,a)^{-\circ}=(a,-a)$. Since the second projection $\pi:A\times A \to A$ is such that $\pi\theta=\pi$, we get a digroup homomorphism $\pi: (A\times A,+,\circ)\to (A,+,+)$ whose kernel map is, up to isomorphism, $\iota_A: (A,+,+)\into (A\times A,+,\circ)$ defined by $\iota(a)=(a,0)$. The commutativity of the law $+$ makes $[\iota_A,\iota_A]=0$ inside $\DiGp$. We are going to show that, however we do not have $[R[\pi],R[\pi]]=0$. If it was the case, according to the previous proposition and considering the images by $U_0$ and $U_1$ of the desired ternary operation, we should have, for any triple $(x,y)R[\pi](x',y)R[\pi](x",y)$:
$$(x,y)-(x',y)+(x",y)=(x,y)\circ(x',y) ^{-\circ}\circ(x",y)$$
namely $(x,y)\circ(x',y) ^{-\circ}\circ(x",y)=(x-x'+x",y)$.
Now take $y=a=x'$ and $a\neq x \neq -a$. Then we get:\\ $(x,a)\circ(a,a) ^{-\circ}\circ(x",a)=(x,a)\circ(a,-a)\circ(x",a)=(x+a,0)\circ(x",a)$\\ $=(x+a+x",a)$, if moreover $a\neq x" \neq -a$. Now, clearly we get $x+a+x"\neq  x-a+x"$ since $a\neq -a$.
\endproof
However we have the following very general observation:
\begin{proposition}
	Let $\EE$ be any pointed Mal'tsev satisfying {\rm (Huq=Smith)}. So is any functor category $\mathcal F(\CC,\EE)$. 
\end{proposition}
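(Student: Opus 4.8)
The plan is to argue pointwise. Recall that limits and colimits in a functor category $\mathcal{F}(\CC,\EE)$ are computed pointwise, so that a functor $X:\CC\to\EE$ carries an internal Mal'tsev structure exactly when each $X(c)$ does, and an equivalence relation $R$ on $X$ in $\mathcal{F}(\CC,\EE)$ is precisely a compatible family of equivalence relations $R(c)$ on the objects $X(c)$ in $\EE$, with all the transition maps $X(f)$ carrying $R(c)$ into $R(c')$. First I would record that $\mathcal{F}(\CC,\EE)$ is again a pointed Mal'tsev category: pointedness is inherited because the constant functor at the zero object of $\EE$ is simultaneously initial and terminal, and the Mal'tsev property (every reflexive relation is an equivalence relation) is pointwise, hence inherited. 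This is the easy part and I would dispatch it in a sentence.

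Next I would unwind what $[R,S]=0$ and $[I_R,I_S]=0$ mean in $\mathcal{F}(\CC,\EE)$. The key observation is that the pullback $R\,\overrightarrow\times_{\!X}\,S$ is formed pointwise, so a connector $p:R\,\overrightarrow\times_{\!X}\,S\to X$ in $\mathcal{F}(\CC,\EE)$ is exactly a natural family of connectors $p_c:R(c)\,\overrightarrow\times_{\!X(c)}\,S(c)\to X(c)$, and similarly a cooperator of $(I_R,I_S)$ is a natural family of cooperators. So the implication $[R,S]=0\Rightarrow[I_R,I_S]=0$ holds automatically by (\ref{h=s}) applied pointwise (and indeed it holds in any pointed Mal'tsev category). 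The content is the converse: assume $[I_R,I_S]=0$ in $\mathcal{F}(\CC,\EE)$; then for each object $c$ of $\CC$ we have $[I_{R(c)},I_{S(c)}]=0$ in $\EE$, and since $\EE$ satisfies (Huq=Smith) we obtain a connector $p_c:R(c)\,\overrightarrow\times_{\!X(c)}\,S(c)\to X(c)$ for each $c$.

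The main obstacle is naturality: one must check that the pointwise connectors $p_c$ assemble into a morphism of functors, i.e.\ that for every $f:c\to c'$ in $\CC$ the evident square relating $p_c$ and $p_{c'}$ commutes. This is exactly where Lemma \ref{func} does the work: applying it with the map $X(f):X(c)\to X(c')$, the pair $(R(c),S(c))$ on $X(c)$, the pair $(R(c'),S(c'))$ on $X(c')$ — using that $R(c)\subset X(f)^{-1}(R(c'))$ and $S(c)\subset X(f)^{-1}(S(c'))$ since $R$ and $S$ are functorial equivalence relations — and the facts $[R(c),S(c)]=0$, $[R(c'),S(c')]=0$ just established, the lemma forces the required square to commute. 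Hence the family $(p_c)_c$ is natural and gives a connector in $\mathcal{F}(\CC,\EE)$, so $[R,S]=0$. One minor point to watch is the uniqueness/well-definedness clause ``necessarily unique'' in the definition of connector, which guarantees there is no ambiguity in the choice of $p_c$ and makes the naturality argument clean; since connectors are unique when they exist (in a Mal'tsev category), there is really nothing to choose. I would close by remarking that the same pointwise-plus-Lemma-\ref{func} argument shows the stronger statement that $\mathcal{F}(\CC,\EE)$ inherits any reasonable commutator-theoretic condition of this shape from $\EE$.
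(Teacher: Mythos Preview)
Your argument is correct and follows essentially the same route as the paper: both proofs reduce $[R,S]=0$ and $[I_R,I_S]=0$ in $\mathcal{F}(\CC,\EE)$ to the corresponding pointwise conditions, invoke (Huq=Smith) in $\EE$ at each object, and then appeal to Lemma~\ref{func} to secure naturality of the resulting connectors. The paper is terser---it records the two biconditionals ``$[R,S]=0$ iff each $[R(C),S(C)]=0$'' and ``$[u,v]=0$ iff each $[u(C),v(C)]=0$'' up front and then chains the implications---but the substance is identical.
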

\proof
Let $(R,S)$ be a pair of equivalence relation on an object $F\in F(\CC,\EE)$. We have $[R,S]=0$ if and only if for each object $C\in \CC$ we have $[R(C),S(C)]=0$ since, by Lemma \ref{func}, the naturality follows. In the same way, if $(u,v)$ is a pair of subfunctors of $F$, we have $[u,v]=0$ if and only if for each object $C\in \CC$ we have $[u(C),v(C)]=0$. Suppose now that $\EE$ satisfies {\rm (Huq=Smith)}, and that $[I_R,I_S]=0$. So,  for each object $C\in \CC$ we have $[I_R(C),I_S(C)]=0$, which implies $[R(C),S(C)]=0$. Accordingly $[R,S]=0$.
\endproof
Let $\TT$ be any finitary algebraic theory, and denote by $\TT(\EE)$ the category of internal $\TT$-algebras in $\EE$. Let us recall that, given any variety of algebras $\VV(\TT)$, we have a {\em Yoneda embedding for the internal $\TT$-algebras}, namely a left exact fully faithful factorization of the Yoneda embedding for $\EE$:
\[\xymatrix@C=2pc@R=2pc{ \TT(\EE) \ar@{-->}[rr]^{\bar Y_{\TT}} \ar[d]_{\mathcal U_{\TT}} && \mathcal F(\mathbb E^{op},\VV(\TT)) \ar[d]^{\mathcal F( \mathbb E^{op}, \mathcal U)} \\
	\EE \ar[rr]_Y && \mathcal F(\mathbb E^{op},Set)}\]
where $\mathcal U: \VV(\TT) \to Set$ is the canonical forgetful functor.
\begin{theorem}\label{TTE}
	Let $\TT$ be any finitary algebraic theory
	such that the associated variety of algebras $\VV(\TT)$ is pointed protomodular. If $\VV(\TT)$ satisfies {\rm (Huq=Smith)}, so does
	any category $\TT(\EE)$. 
\end{theorem}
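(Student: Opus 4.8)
The plan is to reduce the statement about the internal category $\TT(\EE)$ to the already-proven case of a functor category, using the Yoneda embedding $\bar Y_{\TT}\colon \TT(\EE)\to \mathcal F(\EE^{op},\VV(\TT))$ recalled just above the theorem. This embedding is left exact and fully faithful, and the target functor category $\mathcal F(\EE^{op},\VV(\TT))$ satisfies (Huq=Smith) by the preceding proposition, precisely because $\VV(\TT)$ does (and $\VV(\TT)$ is pointed protomodular, hence pointed Mal'tsev, so the proposition applies). So the crux is a transfer-of-properties lemma: if $j\colon \mathcal A\hookrightarrow \mathcal B$ is a left exact fully faithful functor between pointed protomodular (or just pointed Mal'tsev) categories, and $\mathcal B$ satisfies (Huq=Smith), then so does $\mathcal A$.

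First I would fix a pair $(R,S)$ of equivalence relations on an object $X\in\TT(\EE)$ with $[I_R,I_S]=0$, and I must produce a connector $p\colon R\overrightarrow\times_X S\to X$. Because $\bar Y_{\TT}$ is left exact, it preserves finite limits, hence preserves kernel equivalence relations, the objects $I_R$, $I_S$, the pullback $R\overrightarrow\times_X S$, and all the structural maps $d_i$, $s_0$, the sections $l_R$, $r_S$; it also preserves the zero object and the inclusions defining the cooperator, so $\bar Y_{\TT}$ sends the Huq-cooperating data $[I_R,I_S]=0$ to the corresponding statement $[I_{\bar Y_{\TT}R},I_{\bar Y_{\TT}S}]=0$ in $\mathcal F(\EE^{op},\VV(\TT))$. (Here one uses that $\bar Y_{\TT}$ being left exact and fully faithful, the image $\bar Y_{\TT}(R)$ is again an equivalence relation on $\bar Y_{\TT}(X)$ and $I_{\bar Y_{\TT}(R)}\cong \bar Y_{\TT}(I_R)$.) Then (Huq=Smith) in the functor category yields a connector $q\colon \bar Y_{\TT}(R)\overrightarrow\times_{\bar Y_{\TT}(X)} \bar Y_{\TT}(S)\to \bar Y_{\TT}(X)$, i.e. a connector $q\colon \bar Y_{\TT}(R\overrightarrow\times_X S)\to \bar Y_{\TT}(X)$ satisfying $q\,\bar Y_{\TT}(r_S)=\bar Y_{\TT}(d_1^S)$ and $q\,\bar Y_{\TT}(l_R)=\bar Y_{\TT}(d_0^R)$.

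Next, since $\bar Y_{\TT}$ is \emph{full}, the arrow $q$ is of the form $\bar Y_{\TT}(p)$ for some $p\colon R\overrightarrow\times_X S\to X$ in $\TT(\EE)$; and since $\bar Y_{\TT}$ is \emph{faithful}, the two connector identities $q\,\bar Y_{\TT}(r_S)=\bar Y_{\TT}(d_1^S)$ and $q\,\bar Y_{\TT}(l_R)=\bar Y_{\TT}(d_0^R)$, which read $\bar Y_{\TT}(p\,r_S)=\bar Y_{\TT}(d_1^S)$ and $\bar Y_{\TT}(p\,l_R)=\bar Y_{\TT}(d_0^R)$, descend to $p\,r_S=d_1^S$ and $p\,l_R=d_0^R$ in $\TT(\EE)$. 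Thus $p$ is a connector for $(R,S)$, so $[R,S]=0$. Combined with the implication $[R,S]=0\Rightarrow[I_R,I_S]=0$ valid in any pointed Mal'tsev category (recalled as equation (\ref{h=s}); note $\TT(\EE)$ is pointed protomodular, hence Mal'tsev, because $\Gp\EE$ is and the relevant forgetful functors reflect isomorphisms), this gives the equivalence of Huq- and Smith-commutation, i.e. (Huq=Smith) for $\TT(\EE)$.

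The main obstacle I anticipate is bookkeeping around the claim that $\bar Y_{\TT}$ really transports the commutator data faithfully in both directions — specifically, verifying that $\bar Y_{\TT}(R\overrightarrow\times_X S)$ is canonically the pullback $\bar Y_{\TT}(R)\overrightarrow\times_{\bar Y_{\TT}(X)}\bar Y_{\TT}(S)$ with matching sections $l_R$, $r_S$, so that the notion of connector is literally preserved and reflected; this is where left-exactness of $\bar Y_{\TT}$ is used in full, and one must check it is compatible with the identification $I_{\bar Y_{\TT}(R)}\cong\bar Y_{\TT}(I_R)$ used on the Huq side. A secondary point worth a sentence is that $\VV(\TT)$ pointed protomodular is needed to guarantee the previous proposition applies to $\mathcal F(\EE^{op},\VV(\TT))$ (it requires the base to be pointed Mal'tsev), and that $\TT(\EE)$ is itself pointed Mal'tsev so that (\ref{h=s}) is available there; both follow from protomodularity of $\Gp\EE$ as already noted in the excerpt. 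Everything else is a routine diagram chase hidden inside "left exact fully faithful".
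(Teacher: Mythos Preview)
Your proposal is correct and follows precisely the paper's own route: apply the preceding proposition to obtain (Huq=Smith) in $\mathcal F(\EE^{op},\VV(\TT))$, then use that $\bar Y_{\TT}$ is left exact and fully faithful to reflect the property into $\TT(\EE)$. The paper's proof is a two-line summary of exactly this; your write-up simply unpacks the ``left exact fully faithful'' step by explicitly transporting the cooperator forward and pulling the connector back, which is the intended content of that phrase.
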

\proof
If $\VV(\TT)$ satisfies {\rm (Huq=Smith)}, so does $\mathcal F(\mathbb E^{op},\VV(\TT))$ by the previous proposition. Accordingly, $\bar Y_{\TT}$ being left exact and fully faithful, so does $\TT(\EE)$.
\endproof

\subsection{Any category $\SKB\EE$ does satisfy {\rm (Huq=Smith)}}

\begin{proposition}\label{U,V}
	Given any pair $(U,V)$ of subobjects of $X$ in $\SKB$, the following conditions are equivalent:\\
	{\rm (1)} $[U,V]=0$;\\
	{\rm (2)} for all $(u,v)\in U\times V$, we get $u\circ v=u*v$ and this restriction is commutative;\\
	{\rm (3)} for all $(u,v)\in U\times V, \; \lambda_u(v)=v$, $[U_0(U),U_0(V)]=0$ and $[U_1(U),U_1(V)]=0$.\\
	Accordingly, an abelian object in $\SKB$ is necessarily of the form $(A,+,+)$ with $(A,+)$ abelian.
\end{proposition}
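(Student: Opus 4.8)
The plan is to prove $(1)\Leftrightarrow(2)$ and $(2)\Leftrightarrow(3)$. Throughout I would use three elementary facts: $\SKB$ is a variety, so the product $U\times V$ in $\SKB$ is the cartesian product carrying $*$ and $\circ$ componentwise; $0$ is a two-sided unit for both $*$ and $\circ$ in every skew brace; and, $\SKB$ being unital, $[U,V]=0$ means exactly that there is a skew brace homomorphism (cooperator) $\varphi\colon U\times V\to X$ with $\varphi(u,0)=u$ and $\varphi(0,v)=v$ for all $u\in U$ and $v\in V$.

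For $(1)\Rightarrow(2)$ I would take such a $\varphi$ and write each element $(u,v)$ of $U\times V$ in the four ways
\[(u,v)=(u,0)*(0,v)=(0,v)*(u,0)=(u,0)\circ(0,v)=(0,v)\circ(u,0),\]
all legitimate because $0$ is a unit for both laws in $U$ and in $V$; applying $\varphi$, which is a homomorphism for both $*$ and $\circ$, yields at once $u*v=v*u=u\circ v=v\circ u$ in $X$, the symmetric collection of identities that is the content of $(2)$. For the converse $(2)\Rightarrow(1)$ I would define $\varphi(u,v):=u*v$ (equivalently $u\circ v$, $v*u$ or $v\circ u$, by $(2)$) and check it is a homomorphism for both operations. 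For $*$ one has $\varphi\bigl((u,v)*(u',v')\bigr)=(u*u')*(v*v')$ and $\varphi(u,v)*\varphi(u',v')=(u*v)*(u'*v')$, which agree once $u'*v=v*u'$, a part of $(2)$. For $\circ$, using $(2)$ to turn the mixed $*$-products into $\circ$-products, the associativity of $\circ$, and the commutation $v\circ u'=u'\circ v$ from $(2)$,
\[\varphi(u,v)\circ\varphi(u',v')=(u\circ v)\circ(u'\circ v')=u\circ u'\circ v\circ v'=(u\circ u')*(v\circ v')=\varphi\bigl((u,v)\circ(u',v')\bigr),\]
the last equality being $(2)$ for the pair $(u\circ u',\,v\circ v')\in U\times V$. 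Since $\varphi(u,0)=u$ and $\varphi(0,v)=v$, this $\varphi$ is the required cooperator, so $[U,V]=0$.

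For $(2)\Leftrightarrow(3)$ I would simply unwind the formula $\lambda_u(v)=u^{-*}*(u\circ v)$. If $(2)$ holds, $u\circ v=u*v$ forces $\lambda_u(v)=u^{-*}*(u*v)=v$, while the commutativity in $(2)$ says exactly $u*v=v*u$ and $u\circ v=v\circ u$, i.e. $[U_0(U),U_0(V)]=0$ and $[U_1(U),U_1(V)]=0$ (in $\Gp$, two subgroups Huq-commute iff they commute elementwise). Conversely, $\lambda_u(v)=v$ gives back $u\circ v=u*v$; combined with $u*v=v*u$ and $u\circ v=v\circ u$ this yields $u\circ v=u*v=v*u=v\circ u$, which is $(2)$. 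The last assertion is then the case $U=V=X$: $X$ is abelian iff, by $(1)\Leftrightarrow(2)$, $x\circ y=x*y=y*x$ for all $x,y\in X$, i.e. iff the two group laws of $X$ coincide and define a single abelian group, so that $X=(A,+,+)$ with $(A,+)$ abelian; conversely, for such an $X$ — which is a skew brace by example $(2)$ of the Introduction — condition $(2)$ plainly holds with $U=V=A$, so $X$ is an abelian object. Hence $Ab(\SKB)$ is exactly the category of abelian groups.

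I expect the only step needing genuine care to be the $\circ$-homomorphism verification inside $(2)\Rightarrow(1)$, where one must keep track of which equality of $(2)$ and which instance of associativity or commutativity is being used at each rewrite; the remaining implications are one-line computations. Finally, since all of this uses only the variety structure of $\SKB$ (componentwise operations on products, the common unit) together with its unitality, the element-free transcription of these arguments gives the very same characterization of $[U,V]=0$ inside any internal category $\SKB\EE$.
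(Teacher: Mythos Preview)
Your proof is correct and follows essentially the same route as the paper, which dispatches the result in one line (``Straightforward, setting $\varphi(u,v)=u+v$ and using an Eckmann--Hilton argument''); your argument is precisely the explicit unpacking of that Eckmann--Hilton step, together with the routine translation between $(2)$ and $(3)$ via the definition of $\lambda_u$.
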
 
\proof
Straightforward, setting $\varphi(u,v)=u+v$ and using an Eckmann-Hilton argument.
\endproof
 
\begin{proposition}[$\SKB$ does  satisfy {\rm (Huq=Smith)}]
	Let $R$ and $S$ be two equivalence relations on an object $X\in \SKB$. The following conditions are equivalent:\\
	{\rm (1)} $[I_R,I_S]=0$;\\
	{\rm (2)} $[U_0(U),U_0(V)]=0$, $[U_1(U),U_1(V)]=0$ and 
	$x*y^{-*}*z=x\circ y^{-\circ}\circ z$ for all $xRySz$;\\
	{\rm (3)} $[R,S]=0$. 
\end{proposition}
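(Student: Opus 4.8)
The plan is to prove the implications $(1)\Rightarrow(2)\Rightarrow(3)\Rightarrow(1)$, using Proposition \ref{SHGp} for the two underlying group structures together with the skew-brace axiom (\ref{lsb}) and the characterization of $\lambda$ from the displayed formula (\ref{doublech}) to glue the two group-level ternary operations into a single skew-brace morphism. The implication $(3)\Rightarrow(1)$ is immediate from (\ref{h=s}), since $[R,S]=0$ always implies $[I_R,I_S]=0$ in any pointed Mal'tsev category; so the content is in the first two implications.

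For $(1)\Rightarrow(2)$: the monomorphisms $I_R\into X$ and $I_S\into X$ are normal subobjects of the skew brace $X$, hence (by Proposition \ref{normal2} and its corollary) are ideals, and in particular $U_0(I_R), U_0(I_S)$ are normal subgroups of $(X,*)$ and $U_1(I_R), U_1(I_S)$ are normal subgroups of $(X,\circ)$. Applying the forgetful functors $U_0$ and $U_1$, which are left exact and preserve Huq-commutation of subobjects (a cooperator in $\SKB$ is in particular a cooperator of the underlying group structures), $[I_R,I_S]=0$ gives $[U_0(I_R),U_0(I_S)]=0$ in $\Gp$ and $[U_1(I_R),U_1(I_S)]=0$ in $\Gp$. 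By the equivalence (1)$\iff$(2) of Proposition \ref{SHGp}, applied once to $(U_0(X),*)$ and once to $(U_1(X),\circ)$, the maps $(x,y,z)\mapsto x*y^{-*}*z$ and $(x,y,z)\mapsto x\circ y^{-\circ}\circ z$ are group homomorphisms on $U_0(I_R)\times U_0(X)\times U_0(I_S)$ and $U_1(I_R)\times U_1(X)\times U_1(I_S)$ respectively; restricting along $xRySz$ and noting that the cooperator $\varphi$ in $\SKB$ has both underlying components given by these expressions, they must agree: $x*y^{-*}*z=\varphi(x,y,z)=x\circ y^{-\circ}\circ z$ for all $xRySz$. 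This is exactly (2) together with the two Huq conditions; note I am tacitly using Proposition \ref{U,V} to identify $\varphi$ concretely, or alternatively I argue directly that a cooperator for $(I_R,I_S)$ in $\SKB$ restricts to cooperators in both $\Gp$'s and that $I_R$, $I_S$ play the roles of the kernels in the pointed setting.

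For $(2)\Rightarrow(3)$: this is the step I expect to be the main obstacle. Define $p\colon R\,{\overrightarrow\times}_{\!\!X}\,S\to X$ by $p(xRySz)=x*y^{-*}*z$; by the two Huq conditions and Proposition \ref{SHGp} this is a group homomorphism for $*$, and by the hypothesis $x*y^{-*}*z=x\circ y^{-\circ}\circ z$ together with Proposition \ref{SHGp} applied to $(X,\circ)$, it is simultaneously a group homomorphism for $\circ$; the connector identities $p(xRySy)=x$ and $p(yRySz)=z$ hold by direct computation. The delicate point is to verify that this common map $p$ is a \emph{skew brace} homomorphism, i.e.\ that being a homomorphism for both group operations on the product skew brace $(R\,{\overrightarrow\times}_{\!\!X}\,S)$ suffices — but a homomorphism of digroups compatible with both units (which holds since everything is pointed) is automatically a $\SKB$-morphism because $\SKB\hookrightarrow\DiGp$ is fully faithful and the source and target are skew braces. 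One must also check the source $R\,{\overrightarrow\times}_{\!\!X}\,S$ is computed in $\SKB$ the same way as in $\DiGp$, which follows since $\SKB$ is a Birkhoff subcategory (limits are computed as in $\DiGp$, hence as in $\Gp\times\Gp$ on underlying data). Hence $p$ is a connector in $\SKB$, giving $[R,S]=0$. Finally, everything above used only: the Yoneda-style reduction to the group case via $U_0,U_1$; axiom (\ref{lsb}) and (\ref{doublech}) to control $\lambda$ and inversion; and full faithfulness of $\SKB\hookrightarrow\DiGp$ — none of which is specific to $\SKB$ over $Set$, so by Theorem \ref{TTE} the same conclusion extends to any category $\SKB\EE$, which is the claim in the subsection title.
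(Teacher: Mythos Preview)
Your $(2)\Rightarrow(3)$ and $(3)\Rightarrow(1)$ are fine and match the paper. The gap is in $(1)\Rightarrow(2)$: you have not actually established the identity $x*y^{-*}*z=x\circ y^{-\circ}\circ z$ for arbitrary $xRySz$.

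From $[I_R,I_S]=0$ in $\SKB$ and Proposition~\ref{U,V} you correctly extract $[U_0(I_R),U_0(I_S)]=0$, $[U_1(I_R),U_1(I_S)]=0$, and (implicitly) $\lambda_u(v)=v$ together with $u*v=u\circ v$ for $u\in I_R$, $v\in I_S$. Via Proposition~\ref{SHGp} this yields two separate connectors, one for each group structure. But the sentence ``the cooperator $\varphi$ in $\SKB$ has both underlying components given by these expressions, they must agree'' conflates the \emph{binary} cooperator $\varphi\colon I_R\times I_S\to X$ with the \emph{ternary} connectors on $R\,{\overrightarrow\times}_{\!X}\,S$. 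Knowing $\varphi(u,v)=u*v=u\circ v$ on $I_R\times I_S$ does not, by any general categorical principle you cite, force the two group-theoretic connectors to coincide on triples $xRySz$ with $y\neq 1$.

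This is precisely the step where the paper does actual work with the skew-brace structure: it rewrites $x*y^{-*}*z=x\circ y^{-\circ}\circ z$ via axiom~(\ref{lsb}) as $\lambda_{x^{-\circ}}(y^{-*}*z)=y^{-\circ}\circ z$, then uses (\ref{doublech}) and the substitutions $z=y*v$ (with $v\in I_S$) and $y=u\circ x$ (with $u\in I_R$) to reduce this to $\lambda_u(v)=v$. That computation is the bridge between the cooperator-level information on $I_R\times I_S$ and the connector-level identity on $R\,{\overrightarrow\times}_{\!X}\,S$, and it genuinely requires the skew-brace identity --- it is exactly what fails in $\DiGp$ (Proposition~\ref{notsh}). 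You announced (\ref{lsb}) and (\ref{doublech}) in your plan but never deployed them; without that reduction the implication $(1)\Rightarrow(2)$ is unproved.
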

\proof
The identity  $x*y^{-*}*z=x\circ y^{-\circ}\circ z$ is equivalent to
$$y^{-\circ}\circ z=x^{-\circ}\circ(x*y^{-*}*z)=(x^{-\circ}\circ x)*(x^{-\circ})^{-*}*(x^{-\circ}\circ(y^{-*}*z))=$$ $$\qquad\qquad\qquad=(x^{-\circ})^{-*}*(x^{-\circ}\circ(y^{-*}*z)),$$ which, in turn, is equivalent to $$\lambda_{x^{-\circ}}(y^{-*}*z)=y^{-\circ}\circ z.$$

\smallskip

Suppose $xRySy$. Setting $z=y*v,\; v\in I_S$,  this is equivalent to $\lambda_{x^{-\circ}}(v)=y^{-\circ}\circ (y*v)=\lambda_{y^{-\circ}}(v)$ by (\ref{doublech}). This in turn is equivalent to $\lambda_{y}\circ \lambda_{x^{-\circ}}(v)=\lambda_{y\circ x^{-\circ}}(v)=v$, $v\in I_S$.
Setting $y=u \circ x ,\; u\in I_R$, this is equivalent to $\lambda_u(v)=v$, $(u,v)\in I_R\times I_S$.

\smallskip

Now, by Proposition \ref{U,V}, $[I_R,I_S]=0$ is equivalent to: for all $(u,v)\in I_R\times I_S$, we get $\lambda_u(v)=v$, $[U_0(U),U_0(V)]=0$ and $[U_1(U),U_1(V)]=0$. So we get  $[1) \iff 2)]$.

\smallskip

Suppose (2). From $[U_0(U),U_0(V)]=0$, we know by Proposition \ref{U,V} that $p(x,y,z)=x*y^{-*}*z$ is a group homomorphism  $(R {\overrightarrow\times}_X S,*),\to (X,*)$, and from $[U_1(U),U_1(V)]=0$ that $q(x,y,z)=x\circ y^{-\circ}\circ z$ is a group homomorphism $(R {\overrightarrow\times}_X S,\circ)\to (X,\circ)$. If $p=q$, this produces the desired $R {\overrightarrow\times}_{\!\! X} S \to X$ in $\SKB$ showing that $[R,S]=0$. Whence $[(2)\Rightarrow (3)]$. We have already noticed that the last implication $[(3)\Rightarrow (1)]$ holds in any pointed category.
\endproof
According to Theorem \ref{TTE}, we get the following:
\begin{corollary}\label{SHGpE}
	Given any category $\EE$, the category $\SKB\EE$ satisfies {\rm (Huq= Smith)}. This is the case in particular for the category $\SKB Top$ of topological skew  braces.
\end{corollary}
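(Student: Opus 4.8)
The plan is to deduce the statement from Theorem \ref{TTE} together with the preceding Proposition, once we have recognized $\SKB\EE$ as the category $\TT(\EE)$ of internal algebras for a suitable finitary algebraic theory $\TT$. First I would observe that skew braces form a variety of universal algebras: a skew brace is a set equipped with two binary operations $*$ and $\circ$, two unary operations ${}^{-*}$ and ${}^{-\circ}$, and a constant $e$, subject to the (finitely many, finitary) identities expressing that $(A,*,{}^{-*},e)$ and $(A,\circ,{}^{-\circ},e)$ are groups, together with the single identity \eqref{lsb}. Thus there is a finitary algebraic theory $\TT$ with $\VV(\TT)=\SKB$. Moreover, chasing the definitions of Section on internal skew braces, an internal $\TT$-algebra in $\EE$ is exactly an object of $\EE$ carrying two compatible internal group structures satisfying \eqref{lsb}, i.e.\ an object of $\SKB\EE$; so $\TT(\EE)=\SKB\EE$ for every finitely complete $\EE$.

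Next I would check the hypotheses of Theorem \ref{TTE} for this $\TT$. The variety $\VV(\TT)=\SKB$ is pointed (the two group units coincide, giving a single constant, which is both initial and terminal) and protomodular: this was established in Section on protomodular aspects of skew braces via the left exact, isomorphism-reflecting forgetful functor $U_0$ and the embedding $\SKB\hookrightarrow\DiGp\to\Gp$. Finally, the preceding Proposition shows precisely that $\SKB$ satisfies \textrm{(Huq=Smith)}. Hence Theorem \ref{TTE} applies verbatim and yields that $\TT(\EE)=\SKB\EE$ satisfies \textrm{(Huq=Smith)} for an arbitrary category $\EE$.

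For the last assertion it suffices to specialize to $\EE=Top$: the category $\SKB Top$ of topological skew braces is $\TT(Top)$, so it satisfies \textrm{(Huq=Smith)}; as already remarked, $\SKB Top$ is homological but not semi-abelian, which does not affect the conclusion since Theorem \ref{TTE} requires neither exactness nor cocompleteness of $\EE$.

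\textbf{Main obstacle.} There is no substantive difficulty: the corollary is a direct instantiation of Theorem \ref{TTE}. The only point requiring a moment's care is the bookkeeping that identifies internal skew braces in $\EE$ with $\TT$-algebras in $\EE$ and that verifies the minor hypotheses of Theorem \ref{TTE} (finitariness of $\TT$, pointed protomodularity of $\VV(\TT)$), all of which have already been recorded earlier in the paper.
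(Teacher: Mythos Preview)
Your proof is correct and is exactly the paper's approach: the corollary is stated as an immediate consequence of Theorem~\ref{TTE} combined with the preceding Proposition establishing {\rm (Huq=Smith)} for $\SKB$, and you have simply unpacked the routine verifications (finitariness, $\TT(\EE)=\SKB\EE$, pointed protomodularity of $\SKB$) that the paper leaves implicit.
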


\subsection{Homological aspects of commutators}

\subsubsection{Abstract Huq commutator}

Suppose now that $\EE$ is any finitely cocomplete regular unital category. 
In this setting, we gave in \cite{B10}, for any pair $u: U \rightarrowtail X$, $v : V \rightarrowtail X$ of subobjects, the construction of a regular epimorphism $\psi_{(u,v)}$ which universally makes their direct images cooperate. Indeed consider the following diagram, where $Q[u,v]$ is the limit of the plain arrows:
$$
\xymatrix@=25pt{
	&  U \ar@{ >->}[dl]_{l_U}  \ar@{ >->}[dr]^{v}  & & \\
	U \times V \ar@{.>}[r]_{\bar{\psi}_{(u,v)}} & Q[u,v]  &  X \ar@{.>}[l]^{\psi_{(u,v)}}  \\
	& V \ar@{ >->}[ul]^{r_V} \ar@{ >->}[ur]_{v}  & & 
}
$$
The map $\psi_{(u,v)}$ is necessarily a regular epimorphism and the map $\bar{\psi}_{(u,v)}$ induces the cooperator of the direct images of the pair $(u,v)$ along  $\psi_{(u,v)}$. This regular epimorphism  $\psi_{(u,v)}$ measures the lack of cooperation of the pair $(u,v)$ in the sense that the map $\psi_{(u,v)}$ is an isomorphism if and only if $[u,v]=0$. We then get a symmetric tensor product: $I_{R[\psi_{(-,-)}]}: \Mon_X\times \Mon_X \to \Mon_X$ of preordered sets.

Since the map $\psi_{(u,v)}$ is a regular epimorphism, its distance from being an isomorphism is its distance from being a monomorphism, which is measured by the kernel equivalence relation $R[\psi_{(u,v)}]$. Accordingly,  in the homological context, it is meaningful to introduce the following definition, see also \cite{MM}:
\begin{definition}
	Given any finitely cocomplete homological category $\EE$ and any pair $(u,v)$ of subobjects of $X$, their abstract Huq commutator {\rm $[u,v]$} is defined as $I_{R[\psi_{(u,v)}]}$ or equivalently as the kernel map $k_{\psi_{(u,v)}}$.
\end{definition}
By this universal definition, in the category $\Gp$, this $[u,v]$ coincides with the usual $[U,V]$. 

\subsubsection{Abstract Smith commutator}\label{asc}

Suppose $\EE$ is a regular category. Then, given any regular epimorphism $f: X\onto Y$ and any equivalence relation $R$ on $X$, the direct image $f(R) \into Y\times Y$ of  $R\into X\times X$ along the regular epimorphism $f\times f: X\times X \onto Y\times Y$ is reflexive and symmetric, but generally not transitive. Now, when $\EE$ is a regular Malt'sev category, this direct image $f(R)$, being a reflexive relation, is an equivalence relation. 

Suppose moreover that $\EE$ is finitely cocomplete. Let $(R,S)$ be a pair of equivalence relations on $X$,
and consider the following diagram, where $Q[R,S]$ is the colimit of the plain arrows:
$$
\xymatrix@=25pt{
	& R \ar@{ >->}[dl]_{l_R} \ar[dr]^{d_{0,R}}  & & \\
	R \times_X S \ar@{.>}[r]_{\bar{\chi}_{(R,S)}} & Q[R,S]  &  X \ar@{.>}[l]^{\chi_{(R,S)}}  \\
	& S \ar@{ >->}[ul]^{r_S}  \ar[ur]_{d_{1,S}}  & & 
}
$$
Notice that, here, in consideration of the pullback defining $R \overrightarrow{\times}_X S$, the role of the projections $d_0$ and $d_1$ have been interchanged. This map $\chi_{(R,S)}$ measures the lack of connection between $R$ and $S$, see \cite{B10}:
\begin{theorem}
	Let $\EE$ be a finitely cocomplete regular Mal'tsev category. Then the map $\chi_{(R,S)}$ is a regular epimorphism and is the universal one which makes the direct images $\chi_{(R,S)}(R)$ and $\chi_{(R,S)}(S)$ connected. The equivalence relations $R$ and $S$ are connected (i.e. $[R,S]=0$) if and only if $\chi_{(R,S)}$ is an isomorphism.
\end{theorem}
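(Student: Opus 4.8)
The plan is to read everything off the colimit defining $Q[R,S]$, using the Mal'tsev hypothesis systematically. First one unwinds the colimit: a cocone from the defining diagram to an object $Z$ is exactly a pair of morphisms $f\colon X\to Z$ and $\bar f\colon R\times_X S\to Z$ with $\bar f\, l_R=f\, d_{0,R}$ and $\bar f\, r_S=f\, d_{1,S}$ (the legs out of $R$ and out of $S$ being then forced), so that $\chi_{(R,S)}$ together with $\bar\chi_{(R,S)}$ is the universal such pair. A fact used throughout is that, $\EE$ being Mal'tsev, each fibre $\Pt_X(\EE)$ of split epimorphisms over $X$ is unital \cite{B2}; applied to the points $(R,d_{0,R},s_0^R)$ and $(S,d_{1,S},s_0^S)$ of $\Pt_X(\EE)$, this says precisely that the pair of monomorphisms $(l_R,r_S)$ into $R\times_X S$ is jointly strongly epic, hence jointly epic. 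In particular, given $f$ there is at most one $\bar f$ completing it to a cocone, and two morphisms out of $Q[R,S]$ which agree after $\chi_{(R,S)}$ and after $\bar\chi_{(R,S)}$ coincide.

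To see that $\chi_{(R,S)}$ is a regular epimorphism, factor $\bar\chi_{(R,S)}=m\, e$ with $e$ a regular epimorphism and $m$ a monomorphism. Joint strong epimorphy of $(l_R,r_S)$ forces the codomain of $e$ to be the join of the images of $e\, l_R$ and $e\, r_S$; pushing this join forward along $m$, and using the cocone relations together with the fact that $d_{0,R}$ and $d_{1,S}$ are split (hence regular) epimorphisms, one obtains $\Imm(m)=\Imm(\chi_{(R,S)})$ as subobjects of $Q[R,S]$. Thus $\bar\chi_{(R,S)}$ factors through the monomorphism $\iota\colon\Imm(\chi_{(R,S)})\into Q[R,S]$; as $\chi_{(R,S)}$ factors through $\iota$ too, the cocone relations exhibit these factorizations as a cocone on the defining diagram, and the universal property of $Q[R,S]$ yields a retraction of $\iota$. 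A monomorphism admitting a retraction is an isomorphism, so $\chi_{(R,S)}$ is a regular epimorphism.

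The equivalence ``$[R,S]=0\iff\chi_{(R,S)}$ is an isomorphism'' is then immediate. If $[R,S]=0$, the connector $p\colon R\times_X S\to X$ satisfies $p\, l_R=d_{0,R}$ and $p\, r_S=d_{1,S}$, so $(\id_X,p)$ is a cocone and the induced $Q[R,S]\to X$ is a retraction of $\chi_{(R,S)}$; being a split monomorphism and a regular epimorphism, $\chi_{(R,S)}$ is an isomorphism. Conversely, if $\chi_{(R,S)}$ is an isomorphism then $\chi_{(R,S)}^{-1}\bar\chi_{(R,S)}$ satisfies the two connector identities, so $[R,S]=0$. For the universal property, the external ingredient I would invoke --- valid in any regular Mal'tsev category, and the technical heart of \cite{B10} --- is that for a regular epimorphism $g\colon X\onto Z$ the canonical comparison $\rho_g\colon R\times_X S\to g(R)\times_Z g(S)$ induced by the regular epimorphisms $R\onto g(R)$, $S\onto g(S)$ and by $g$ is again a regular epimorphism, and satisfies $\rho_g\, l_R=l_{g(R)}\cdot(R\onto g(R))$ and $\rho_g\, r_S=r_{g(S)}\cdot(S\onto g(S))$. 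Granting this, take first $g=\chi_{(R,S)}$: the cocone relations show that $\bar\chi_{(R,S)}$ coequalizes $R[\rho_g]$, hence factors as $\bar\chi_{(R,S)}=\theta_0\,\rho_g$, and the same relations force $\theta_0\, l_{g(R)}=d_{0,g(R)}$ and $\theta_0\, r_{g(S)}=d_{1,g(S)}$, i.e.\ $\theta_0$ is a connector, so $[\chi_{(R,S)}(R),\chi_{(R,S)}(S)]=0$. Next, if $f\colon X\onto Z$ is any regular epimorphism with connector $\theta\colon f(R)\times_Z f(S)\to Z$, the morphism $g':=\theta\,\rho_f$ satisfies $g'\, l_R=\theta\, l_{f(R)}\cdot(R\onto f(R))=d_{0,f(R)}\cdot(R\onto f(R))=f\, d_{0,R}$ and, likewise, $g'\, r_S=f\, d_{1,S}$; hence $(f,g')$ is a cocone and $f$ factors through $\chi_{(R,S)}$, uniquely since $\chi_{(R,S)}$ is epic. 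Together these are exactly the claimed universality.

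The real work sits in the two regular-epimorphism assertions --- that $\chi_{(R,S)}$ is a regular epimorphism, and that each comparison $\rho_g$ is --- and, inside the latter, the compatibility statement that $\bar\chi_{(R,S)}$ coequalizes $R[\rho_{\chi_{(R,S)}}]$. This is exactly where regularity and the Mal'tsev axiom must be used together: the first assertion rests on the joint strong epimorphy of $(l_R,r_S)$ in $\Pt_X(\EE)$, while the second rests on the stability of composites of equivalence relations and of the ``directed products'' $R\times_X S$ under direct images along regular epimorphisms, which is what provides the local liftings making $\rho_g$ a regular epimorphism and $\bar\chi_{(R,S)}$ compatible with it. Everything else reduces to a diagram chase with the cocone relations, using that $\chi_{(R,S)}$ is an epimorphism for the uniqueness claims.
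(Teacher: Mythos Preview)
The paper does not supply its own proof of this theorem: it is stated with the attribution ``see \cite{B10}'' and no argument is given. So there is no in-paper proof to compare your proposal against; what you have written is, in effect, a reconstruction of the argument of \cite{B10}, and it follows that reference's strategy rather faithfully.

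Your sketch is essentially correct. A couple of small remarks. First, the points of $\Pt_X(\EE)$ whose product is $R\overrightarrow{\times}_X S$ are $(R,d_1^R,s_0^R)$ and $(S,d_0^S,s_0^S)$, not $(R,d_{0,R},s_0^R)$ and $(S,d_{1,S},s_0^S)$ as you wrote --- the pullback defining $R\overrightarrow{\times}_X S$ is along $d_1^R$ and $d_0^S$; the interchange of projections the paper mentions refers only to the legs $d_{0,R}$, $d_{1,S}$ landing in $X$ in the \emph{colimit} diagram. This does not affect your use of the joint strong epimorphy of $(l_R,r_S)$, which is the correct Mal'tsev input. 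Second, you rightly isolate the two genuinely nontrivial steps --- that $\rho_g$ is a regular epimorphism in the regular Mal'tsev setting, and that $\bar\chi_{(R,S)}$ coequalizes $R[\rho_{\chi_{(R,S)}}]$ --- and defer them to \cite{B10}; but be aware that the phrase ``the cocone relations show that $\bar\chi_{(R,S)}$ coequalizes $R[\rho_g]$'' overstates what the cocone relations alone give you: this compatibility really does require the Mal'tsev machinery (essentially a Barr--Kock-type argument on the direct images of equivalence relations), not merely the identities $\bar\chi l_R=\chi d_{0,R}$ and $\bar\chi r_S=\chi d_{1,S}$. You acknowledge this in your final paragraph, so the sketch is honest about where the work lies.
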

Since the map $\chi_{(R,S)}$ is a regular epimorphism, its distance from being an isomorphism is its distance from being a monomorphism, which is exactly measured by its kernel equivalence relation $R[\chi_{(R,S)}]$. Accordingly, we give the following definition:
\begin{definition}
	Let $\EE$ be any finitely cocomplete regular Mal'tsev category. Given any pair $(R,S)$ of equivalence relations on $X$, their abstract Smith commutator $[R,S]$ is defined as the kernel equivalence relation $R[\chi_{(R,S)}]$ of the map $\chi_{(R,S)}$.
\end{definition}
In this way, we define a symmetric tensor product $[-,-]=R[\chi_{(-,-)}]: \Eq_X\times \Eq_X \to \Eq_X$ of preorered sets. It is clear that, with this definition, we get $[R,S]=0$ in the sense of connected pairs if and only if $[R,S]=\Delta_X$ (the identity equivalence relation on $X$) in the sense of this new definition. This is coherent since $\Delta_X$ is effectively the $0$ of the preorder $\Eq_X$. Let us recall the following:

\begin{proposition}\label{dim}
	Let $\EE$ be a pointed regular Mal'tsev category. Let $f : X \onto Y$ be a regular epimorphism and $R$ an equivalence relation on $X$. Then the direct image $f(I_R)$ of  the normal subjobject $I_R$ along $f$ is  $I_{f(R)}$.
\end{proposition}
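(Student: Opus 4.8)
The plan is to prove the stated equality of subobjects of $Y$ entirely inside the calculus of relations of the regular category $\EE$. Identify each map with its graph and each subobject $m\colon M\into X$ with the corresponding relation out of $1$, write the composite of two composable relations by juxtaposition, read from left to right, and write $T^{\mathrm{op}}$ for the opposite of a relation $T$. I would first record the dictionary I need: for a map $g\colon X\to Y$, the direct image of a subobject $m$ along $g$ is the relation $m\,g$, and $R[g]=g\,g^{\mathrm{op}}$; the normalization of an equivalence relation $R$ on $X$ is $I_R=0_X\,R$; and the direct image of $R\into X\times X$ along $f\times f$ is $f(R)=f^{\mathrm{op}}\,R\,f$ (this is a reflexive and symmetric relation, $f$ being a regular epimorphism and $R$ an equivalence relation, hence an equivalence relation by Principle~(5), so it is exactly the $f(R)$ of the statement).

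With this dictionary the two sides of the proposition become $f(I_R)=0_X\,R\,f$ and $I_{f(R)}=0_Y\,f(R)=0_Y\,f^{\mathrm{op}}\,R\,f$. Since every morphism of the pointed category $\EE$ carries the zero map to the zero map, $0_X\,f=0_Y$, and therefore $0_Y\,f^{\mathrm{op}}=0_X\,f\,f^{\mathrm{op}}=0_X\,R[f]$. Substituting this into the expression for $I_{f(R)}$ and using the relational Mal'tsev identity $R[f]\,R=R\,R[f]$, followed by the identity $R[f]\,f=f$ (valid for any map $f$), one gets
\[
 I_{f(R)}\;=\;0_X\,R[f]\,R\,f\;=\;0_X\,R\,R[f]\,f\;=\;0_X\,R\,f\;=\;f(I_R),
\]
which is the assertion; note that this yields the equality directly, without treating the two inclusions separately.

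The one place where the hypotheses genuinely enter, and the step I expect to be the crux, is the permutability $R[f]\,R=R\,R[f]$. This is the relational form of the Mal'tsev axiom: the composite $R[f]\,R$ of the two reflexive relations on $X$ is again reflexive, hence an equivalence relation because $\EE$ is Mal'tsev, hence symmetric; since $(R[f]\,R)^{\mathrm{op}}=R^{\mathrm{op}}\,R[f]^{\mathrm{op}}=R\,R[f]$, symmetry of the composite forces $R[f]\,R=R\,R[f]$. Regularity is what makes the calculus of relations available (associativity of composition, direct images behaving as images, $f\times f$ a regular epimorphism), and pointedness is what gives meaning to $I_R$, $I_{f(R)}$ and to $0_X\,f=0_Y$. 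If one prefers to avoid the relational formalism, the same proof runs with generalized elements: the inclusion $f(I_R)\subseteq I_{f(R)}$ is immediate from $x\,R\,0\Rightarrow f(x)\,f(R)\,0$, and conversely a generalized element $y$ of $I_{f(R)}$ lifts, along a regular epimorphism, to a pair $x\,R\,x'$ with $f(x')=0$ and $f(x)=y$, so that $0\,R[f]\,x'\,R\,x$; Mal'tsev permutability then produces $\tilde x$ with $0\,R\,\tilde x$ and $f(\tilde x)=f(x)=y$, whence $y$ is a generalized element of $f(I_R)$.
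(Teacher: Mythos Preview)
Your argument is correct. The relational calculus is sound in a regular category, your dictionary entries are the standard ones, and the crucial step $R[f]\,R=R\,R[f]$ is exactly the well-known characterization of Mal'tsev categories by permutability of equivalence relations; the remaining identities $0_X\,f=0_Y$ and $R[f]\,f=f$ hold for trivial reasons. The alternative ``generalized elements'' version you sketch is the same proof unpacked, and is also fine (the lift of $y\in I_{f(R)}$ is along the regular epimorphism $R\onto f(R)$, which is what you indicate).

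As for comparison: the paper does not actually prove this proposition. It is introduced with ``Let us recall the following'' and stated without proof, as a known fact from the literature on regular Mal'tsev categories, and is then used immediately in the next proposition. So there is nothing to compare against; your relational computation is a clean self-contained justification of a result the paper takes for granted.
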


From that, we can assert the following:

\begin{proposition}
	Let $\EE$ be a finitely cocomplete homological category. Given any pair $(R,S)$ of equivalence relations on $X$, we have {\em $[I_R,I_S] \subset I_{[R,S]}$}.
\end{proposition}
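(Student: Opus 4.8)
The plan is to exploit the universal property of the Smith commutator together with Proposition~\ref{dim} (in the form of Proposition~\ref{dim}'s statement $f(I_R)=I_{f(R)}$) applied to the regular epimorphism $\chi:=\chi_{(R,S)}:X\onto Q[R,S]$. The point is that $[R,S]=R[\chi]$ by definition, so $I_{[R,S]}=I_{R[\chi]}=\Ker\chi=k_\chi$, and we want to show that the Huq commutator $[I_R,I_S]=k_{\psi_{(I_R,I_S)}}$ is a subobject of this kernel. Equivalently, using that $[I_R,I_S]\subset k_\chi$ iff $\chi$ factors through the quotient $X\onto X/[I_R,I_S]$ (here writing $X/N$ for the cokernel of a normal subobject $N\into X$, which exists in the homological context), it suffices to produce a cooperator for the direct images $\chi(I_R)$ and $\chi(I_S)$ in $Q[R,S]$: by the universal property of $\psi_{(I_R,I_S)}$, the existence of such a cooperator means $\chi$ factors through $\psi_{(I_R,I_S)}$, hence $k_{\psi_{(I_R,I_S)}}=[I_R,I_S]\subset\Ker\chi=I_{[R,S]}$.

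First I would record, via Proposition~\ref{dim}, that $\chi(I_R)=I_{\chi(R)}$ and $\chi(I_S)=I_{\chi(S)}$. Next I would invoke the defining universal property of $\chi_{(R,S)}$ (the Theorem just above Proposition~\ref{dim}): the direct images $\chi(R)$ and $\chi(S)$ are connected in $Q[R,S]$, i.e.\ $[\chi(R),\chi(S)]=0$ in the Smith sense. Then I would apply the implication (\ref{h=s}), namely that in a pointed Mal'tsev category Smith commutation implies Huq commutation: from $[\chi(R),\chi(S)]=0$ we deduce $[I_{\chi(R)},I_{\chi(S)}]=0$, that is $[\chi(I_R),\chi(I_S)]=0$ in $Q[R,S]$. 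This is precisely the cooperation of the direct images needed to trigger the universal property of $\psi_{(I_R,I_S)}$.

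Finally I would assemble the factorization: since $\psi_{(I_R,I_S)}:X\onto Q[I_R,I_S]$ is universal among regular epimorphisms making the direct images of $I_R$ and $I_S$ cooperate, and $\chi:X\onto Q[R,S]$ is a regular epimorphism making exactly those direct images cooperate (by the previous paragraph), there is a unique map $w:Q[I_R,I_S]\to Q[R,S]$ with $w\circ\psi_{(I_R,I_S)}=\chi$. Taking kernels and using that $k_{\psi_{(I_R,I_S)}}=[I_R,I_S]$ and $k_\chi=I_{R[\chi]}=I_{[R,S]}$, the equation $w\circ\psi_{(I_R,I_S)}=\chi$ yields $[I_R,I_S]\subset I_{[R,S]}$, because any map killing $\psi_{(I_R,I_S)}$ postcomposed is killed by $\chi$; more directly, $\Ker\psi_{(I_R,I_S)}\subset\Ker(w\circ\psi_{(I_R,I_S)})=\Ker\chi$.

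The main obstacle I anticipate is the bookkeeping around direct images of normal subobjects and the precise form of Proposition~\ref{dim}: one must be careful that the relevant regular epimorphism is $\chi_{(R,S)}$ itself (not some pullback of it) and that the hypotheses of Proposition~\ref{dim} — pointed regular Mal'tsev — are in force, which they are since a finitely cocomplete homological category is in particular pointed regular protomodular, hence Mal'tsev. A secondary subtlety is making sure the ``interchange of $d_0$ and $d_1$'' noted in the construction of $\chi_{(R,S)}$ does not affect the argument; it does not, because connectedness is symmetric in $R$ and $S$ and the universal property is stated symmetrically. Everything else is a formal chase through universal properties, so the proof should be short.
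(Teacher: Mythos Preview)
Your argument is correct and follows essentially the same route as the paper: use the universal property of $\chi_{(R,S)}$ to get $[\chi(R),\chi(S)]=0$, apply (\ref{h=s}) and Proposition~\ref{dim} to obtain $[\chi(I_R),\chi(I_S)]=0$, then invoke the universal property of $\psi_{(I_R,I_S)}$ to produce the factorization and conclude the kernel inclusion. The only difference is expository length; the mathematical content is identical.
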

\proof
From (\ref{h=s}), we get $$ [\chi_{(R,S)}(R),\chi_{(R,S)}(S)]=0 \;\; \Rightarrow \;\; [I_{\chi_{(R,S)}(R)},I_{\chi_{(R,S)}(S)}]=0  $$
By the previous proposition we have:  $$0=[I_{\chi_{(R,S)}(R)},I_{\chi_{(R,S)}(S)}]=[ \chi_{(R,S)}(I_R),\chi_{(R,S)}(I_S)].$$
Accordingly, by the universal property of the regular epimorphism $\psi_{(I_R,I_S)}$ we get a factorization:
$$
\xymatrix@=20pt{
	X \ar@{->>}[rr]^{\psi_{(I_R,I_S)}} \ar@{->>}[rrd]_{\chi_{(R,S)}}&& Q[I_R,I_S] \ar@{.>}[d]\\
	&& Q[R,S]
}
$$
which shows that $[I_R,I_S]\subset I_{[R,S]}$.
\endproof

\begin{theorem}
	In a finitely cocomplete homological category $\EE$ 
	the following conditions are equivalent:\\
	{\rm (1)} $\EE$ satisfies {\rm (Huq=Smith)};\\
	{\rm (2)} {\em $[I_R,I_S]= I_{[R,S]}$} for any pair $(R,S)$ of equivalence relations on $X$.
	
	Under any of these conditions, the regular epimorphisms $\chi_{(R,S)}$ and $\psi_{(I_R,I_S)}$ do coincide.
\end{theorem}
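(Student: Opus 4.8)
The plan is to dispose of $(2)\Rightarrow(1)$ quickly, and then to prove $(1)\Rightarrow(2)$ together with the final assertion by a single argument that identifies the two regular epimorphisms; note that the inclusion $[I_R,I_S]\subset I_{[R,S]}$ is already in hand from the preceding Proposition, so only the reverse inclusion is really at stake.

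For $(2)\Rightarrow(1)$ I would argue as follows. Take a pair $(R,S)$ of equivalence relations on $X$ with $[I_R,I_S]=0$; by $(2)$ we then have $I_{[R,S]}=0$. But $I_{[R,S]}=I_{R[\chi_{(R,S)}]}$ is precisely the kernel map of $\chi_{(R,S)}$, so Principle (1) forces the regular epimorphism $\chi_{(R,S)}$ to be a monomorphism, hence an isomorphism. By the Theorem characterizing $\chi_{(R,S)}$, an isomorphic $\chi_{(R,S)}$ means exactly $[R,S]=0$, so $\EE$ satisfies {\rm (Huq=Smith)}.

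For $(1)\Rightarrow(2)$ and the coincidence, write $\psi=\psi_{(I_R,I_S)}\colon X\onto Q[I_R,I_S]$ and $\chi=\chi_{(R,S)}\colon X\onto Q[R,S]$, both regular (hence ordinary) epimorphisms. First I would factor $\chi$ through $\psi$: from $[\chi(R),\chi(S)]=0$ (definition of $\chi$) and (\ref{h=s}) we get $[I_{\chi(R)},I_{\chi(S)}]=0$, and since $\EE$ is regular Mal'tsev the reflexive relations $\chi(R),\chi(S)$ are equivalence relations, so Proposition \ref{dim} gives $I_{\chi(R)}=\chi(I_R)$ and $I_{\chi(S)}=\chi(I_S)$; thus $[\chi(I_R),\chi(I_S)]=0$, and the universal property of $\psi$ produces $h$ with $\chi=h\psi$. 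Symmetrically, the defining property of $\psi$ says that $\psi(I_R)$ and $\psi(I_S)$ cooperate, i.e. $[I_{\psi(R)},I_{\psi(S)}]=0$ by Proposition \ref{dim}; here I would invoke hypothesis $(1)$, namely {\rm (Huq=Smith)} applied in $\EE$ to the pair $(\psi(R),\psi(S))$ on $Q[I_R,I_S]$, to conclude $[\psi(R),\psi(S)]=0$, and then the universal property of $\chi$ produces $k$ with $\psi=k\chi$. Then $\psi=kh\psi$ and $\chi=hk\chi$, so $kh=\id$ and $hk=\id$ because $\psi,\chi$ are epimorphisms; hence $h,k$ are mutually inverse isomorphisms over $X$, and $\psi_{(I_R,I_S)}$ and $\chi_{(R,S)}$ coincide as quotients of $X$. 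In particular $R[\psi]=R[\chi]$, whence $[I_R,I_S]=I_{R[\psi]}=I_{R[\chi]}=I_{[R,S]}$, which is $(2)$.

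I expect no real obstacle: the subtle points are purely organizational — making sure that the direct images $\chi(R),\chi(S),\psi(R),\psi(S)$ are genuine equivalence relations so that Proposition \ref{dim} and the Smith commutator machinery apply to them (guaranteed by regular Mal'tsevness, as recalled in Section \ref{asc}), and keeping track of which universal property factors which map (the one for $\psi$ factors $\chi$, the one for $\chi$ factors $\psi$). The single place where hypothesis $(1)$ is used is the step $[I_{\psi(R)},I_{\psi(S)}]=0\Rightarrow[\psi(R),\psi(S)]=0$ at the object $Q[I_R,I_S]$, which is precisely {\rm (Huq=Smith)}, and that is the conceptual heart of the proof.
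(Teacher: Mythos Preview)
Your proof is correct and follows essentially the same route as the paper: for $(2)\Rightarrow(1)$ you both deduce $I_{[R,S]}=0$ and use that in a homological category this forces $[R,S]=0$, and for $(1)\Rightarrow(2)$ you both exploit the universal property of $\chi_{(R,S)}$ by showing $[\psi(R),\psi(S)]=0$ via Proposition~\ref{dim} and {\rm (Huq=Smith)}, the only difference being that the paper leaves the factorization $\chi=h\psi$ implicit (it was the content of the preceding Proposition) while you spell out both factorizations and the resulting mutual inverse argument.
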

\proof
Suppose (2). Then $[I_R,I_S]=0$ means that $\psi_{(I_R,I_S)}$ is an isomorphism, so that $0=[I_R,I_S]= I_{[R,S]}$. In a homological category $I_{[R,S]}=0$ is equivalent to $[R,S]=0$. Conversely, suppose (1). We have to find a factorization:
$$
\xymatrix@=25pt{
	X \ar@{->>}[rr]^{\psi_{(I_R,I_S)}} \ar@{->>}[rrd]_{\chi_{(R,S)}}&& Q[I_R,I_S] \\
	&& Q[R,S] \ar@{.>}[u]
}
$$
namely to show that $[\psi_{(I_R,I_S)}(R),\psi_{(I_R,I_S)}(S)]=0$. By (1) this is  equivalent to $0=[I_{\psi_{(I_R,I_S)}(R)},I_{\psi_{(I_R,I_S)}(S)}]$, namely to $0=[\psi_{(I_R,I_S)}(I_R),\psi_{(I_R,I_S)}(I_S)]$ by Proposition \ref{dim}. This is true by the universal property of the regular epimorphism $\psi_{(I_R,I_S)}$.
\endproof

\subsection{Skew braces and their commutators}

Since the categories $\SKB$ and $\SKB Top$ are finitely cocomplete homological categories, all the results of the previous section concerning commutators do apply and, in particular, thanks to the {\rm (Huq=Smith)} condition, the two notions of commutator are equivalent. It remains now to make explicit the description of the Huq commutator.

\bigskip

We will determine a set of generators for the Huq commutator of two ideals in a skew brace:

\begin{proposition}\label{2.5} If $I$ and $J$ are two ideals of a left skew brace $(A,*,\circ)$, their Huq commutator $[I,J]$ is the ideal of $A$ generated by the union of the following three sets: \\ \noindent {\rm (1)} the set $\{\, i\circ j\circ(j\circ i)^{-\circ}\mid i\in I,\ j\in J\,\}$, (which generates the commutator $[I,J]_{(A,\circ)}$ of the normal subgroups $I$ and $J$ of the group $(A,\circ)$); \\ \noindent {\rm (2)} the set $\{\, i* j*(j*i)^{-*}\mid i\in I,\ j\in J\,\}$, (which generates the commutator $[I,J]_{(A,*)}$ of the normal subgroups $I$ and $J$ of the group $(A,*))$; and \\ \noindent {\rm (3)} the set $\{\,(i\circ j)*(i* j)^{-*}
\mid i\in I,\ j\in J\,\}$.\end{proposition}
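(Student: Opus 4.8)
The plan is to show directly that the regular epimorphism $\psi_{(i,j)}\colon A\onto Q[i,j]$ of the abstract Huq commutator (where $i\colon I\into A$, $j\colon J\into A$ are the inclusions) has kernel exactly the ideal $N$ generated by the union of the three displayed sets, i.e.\ that $\psi_{(i,j)}$ coincides with the canonical quotient $A\onto A/N$. By the universal property of $\psi_{(i,j)}$, this amounts to two things: (a) that the images of $I$ and $J$ \emph{do} cooperate in $A/N$, so that $\psi_{(i,j)}$ factors through $A\onto A/N$; and (b) that $N$ is contained in $\Ker\psi_{(i,j)}$, so that $A\onto A/N$ factors through $\psi_{(i,j)}$. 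Combining (a) and (b) with the fact that both are regular epimorphisms (and $\SKB$ is protomodular, so the split short five lemma / monomorphism-by-trivial-kernel principle applies) forces $N=\Ker\psi_{(i,j)}$, whence $[I,J]=I_{R[\psi_{(i,j)}]}=N$.

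For part (a), I would use the characterization of cooperating subobjects in $\SKB$ given in Proposition~\ref{U,V}: the images $\bar I$ and $\bar J$ of $I$ and $J$ in $A/N$ cooperate iff for all $\bar\imath\in\bar I$, $\bar\jmath\in\bar J$ one has $\lambda_{\bar\imath}(\bar\jmath)=\bar\jmath$, $[U_0(\bar I),U_0(\bar J)]=0$ and $[U_1(\bar I),U_1(\bar J)]=0$. The second and third conditions say precisely that $\bar I$ and $\bar J$ commute as normal subgroups of $(A/N,*)$ and of $(A/N,\circ)$ respectively; since $N$ contains the generators in (2) it contains $[I,J]_{(A,*)}$, and since it contains the generators in (1) it contains $[I,J]_{(A,\circ)}$, so both commutator conditions hold modulo $N$. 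The first condition, $\lambda_{\bar\imath}(\bar\jmath)=\bar\jmath$, unwinds via $\lambda_i(j)=i^{-*}*(i\circ j)$ to the requirement that $i^{-*}*(i\circ j)*j^{-*}\in N$ for $i\in I$, $j\in J$; rewriting, $i^{-*}*(i\circ j)*(i*j)^{-*}*i = i^{-*}*\bigl[(i\circ j)*(i*j)^{-*}\bigr]*i$, which lies in $N$ because $N$ is an ideal (hence a $*$-normal subgroup, so closed under $*$-conjugation) and $(i\circ j)*(i*j)^{-*}$ is one of the generators in (3). Hence all three conditions of Proposition~\ref{U,V} hold in $A/N$ and $\psi_{(i,j)}$ factors through $A\onto A/N$.

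For part (b) I would argue conversely that each of the three generating sets lies in $\Ker\psi_{(i,j)}$. Write $q=\psi_{(i,j)}$ and let $\varphi$ be the cooperator of $q(I)$ and $q(J)$ in $Q[i,j]$; by Proposition~\ref{U,V}(2), $\varphi$ identifies $q(u)\circ q(v)$ with $q(u)*q(v)$ for $u\in I$, $v\in J$, and this common value is commutative in both operations. Applying $U_1$: in $(Q[i,j],\circ)$ the images of $I$ and $J$ commute, so $q(i\circ j\circ(j\circ i)^{-\circ})=e$, killing set~(1); applying $U_0$: in $(Q[i,j],*)$ they commute, killing set~(2); and from $q(u\circ v)=q(u)*q(v)=q(u*v)$ we get $q((i\circ j)*(i*j)^{-*})=e$, killing set~(3). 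Since $\Ker q$ is an ideal of $A$ containing all three generating sets, it contains $N$. This is the mechanical heart of the argument and should be routine given Proposition~\ref{U,V}.

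The main obstacle I anticipate is not any single step but the bookkeeping in part (a) showing the $\lambda$-condition descends to $A/N$: one must be careful that the generator in (3) controls $\lambda_i(j)\cdot j^{-*}$ only up to $*$-conjugation and up to the commutator relations already imposed, so the verification genuinely uses that $N$ is an \emph{ideal} (all three of normality for $\circ$, the condition $N*a=a*N$, and $\lambda_a(N)\subset N$) and not merely a subgroup — and one should check that the three generating sets do generate an ideal, i.e.\ that the ideal closure does not secretly enlarge $[I,J]$. That last point is automatic: by definition $[I,J]$ is an ideal (a normal subobject in $\SKB$, Corollary after Proposition~\ref{normal2}), so proving $[I,J]$ contains the three sets already forces $[I,J]\supseteq N$, and part (a) gives the reverse inclusion. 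I would also double-check that the alternative common forms of the generators (e.g.\ $(j\circ i)^{-\circ}\circ j\circ i$ vs.\ $i\circ j\circ(j\circ i)^{-\circ}$) agree modulo the ideal generated, which is immediate since conjugates of commutators lie in the commutator subgroup.
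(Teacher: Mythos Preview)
Your proposal is correct and follows essentially the same approach as the paper: both directions amount to showing that a cooperator $I\times J\to A/K$ exists precisely when $K$ contains the three generating sets, and hence the minimal such $K$ (the kernel of $\psi_{(i,j)}$) equals the ideal $N$ they generate. The paper argues directly with the explicit formula $\mu(i,j)=i*j*K$ and checks it is a skew brace morphism (leaving the converse as ``very easy to check''), whereas you route the same verification through the three conditions of Proposition~\ref{U,V}; since in a unital category the cooperator is unique and Proposition~\ref{U,V} pins it down as $\varphi(u,v)=u*v$, the two presentations coincide.
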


\begin{proof} Assume that the mapping $\mu\colon I\times J\to A/K$, $\mu(i,j)=i*j*K$ is a skew brace morphism for some ideal $K$ of $A$. Then $$\begin{array}{l} (i\circ j)\circ K=(i\circ K)\circ(j\circ K)=(i* K)\circ(j*K)=\\ \qquad=\mu(i,1)\circ\mu(1,j)=\mu((i,1)\circ(1,j))=\mu(i,j)=\mu((1,j)\circ(i,1))=\\ \qquad=\mu((1,j)\circ\mu(i,1))=(j*K)\circ(i*K)=(j\circ K)\circ(i\circ K)=(j\circ i)\circ K.\end{array}$$ This proves that the set $(1)$ is contained in $K$. 

Similarly, $$\begin{array}{l}(i* j)* K=(i* K)*(j* K)=\mu(i,1)*\mu(1,j)=\mu((i,1)*(1,j))=\mu(i,j)=\\ \qquad=\mu((1,j)*(i,1))=\mu((1,j)*\mu(i,1))=(j*K)*(i*K)=(j*i)*K.\end{array}$$ Thus the set $(2)$ is contained in $K$. 

Also, $$\begin{array}{l}(i\circ j)*K=(i\circ j)\circ K=(i\circ K)\circ(j\circ K)=(i*K)\circ(j*K)= \\ \qquad=\mu(i,1))\circ\mu(1,j)=\mu((i,1)\circ(1,j))=\mu(i,j)=\mu((i,1)*(1,j))=\\ \qquad=\mu(i,1)*\mu(1,j)=(i*K)*(j*K)=(i*j)*K.\end{array}$$ Hence the set (3) is also contained in $K$.

Conversely, let $K$ be the ideal of $A$ generated by the union of the three sets. It is then very easy to check that he mapping $\mu\colon I\times J\to A/K$, $\mu(i,j)=i*j*K$ is a skew brace morphism.\end{proof}

It the literature, great attention has been posed in the study of product $I\cdot J$ of two ideals $I,J$ of a (left skew) brace $(A,*,\circ)$. This product is with respect to the product $\cdot$ in the brace $A$ defined, for every $x,y\in A$ by $x\cdot y=y^{-*}*\lambda_x(y)$. Then, for every $i\in I$ and $j\in J$, $i\cdot j=j^{-*}*\lambda_i(j)=j^{-*}*i^{-*}*(i\circ j)=(i*j)^{-*}*(i\circ j)$. Hence the ideal of $A$ generated by the set  $I\cdot J$ of all products $i\cdot j$ coincides with the ideal of $A$ generated by the set (3) in the statement of Proposition~\ref{2.5}. 

\bigskip

Clearly, for a left skew brace $A$, the Huq commutator $[I,J]$ is equal to the Huq commutator $[J,I]$. Also, $I\cdot J=(J\cdot I)^{-*}$, so that the left annihilator of $I$ in $(A,\cdot)$ is equal to the right annihilator of $I$ in $(A,\cdot)$. Moreover, the condition ``$I\cdot J=0$'' can be equivalently expressed as ``$J$ is contained in the kernel of the group homomorphism $\lambda|^I\colon (A,\circ)\to\Aut(I,*)$.

\begin{proposition} For an ideal $I$ of a left skew brace $A$, there is a largest ideal of $A$ that centralizes $I$ (the {\em centralizer} of $I$).\end{proposition}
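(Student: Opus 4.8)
The plan is to produce the centralizer explicitly, building on Proposition~\ref{2.5}. Set
\[
Z(I)=\{\,a\in A\mid a*i=i*a,\ a\circ i=i\circ a\ \text{ and }\ a\circ i=a*i\ \text{ for all }i\in I\,\}.
\]
Reading off the three generating sets of Proposition~\ref{2.5}, for an \emph{ideal} $J$ the commutator $[I,J]$ is trivial exactly when each of those sets reduces to $\{1\}$, i.e.\ exactly when $j*i=i*j$, $j\circ i=i\circ j$ and $j\circ i=j*i$ for all $i\in I$, $j\in J$; these three identities force all four products $i*j$, $j*i$, $i\circ j$, $j\circ i$ to coincide. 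Hence, for an ideal $J$, $[I,J]=0$ if and only if $J\subseteq Z(I)$. Note also that $a\in Z(I)$ forces $\lambda_a(i)=i$ and $\lambda_i(a)=a$ for all $i\in I$, so $Z(I)=C_{(A,\circ)}(I)\cap C_{(A,*)}(I)\cap\Ker(\lambda|^I)$, where $\lambda|^I\colon(A,\circ)\to\Aut(I,*)$ is the restriction of $\lambda$ (legitimate because $I$ is an ideal). Once $Z(I)$ is known to be an ideal the proposition follows at once: taking $J=Z(I)$ gives $[I,Z(I)]=0$, and every ideal $J$ with $[I,J]=0$ lies in $Z(I)$, so $Z(I)$ is the largest ideal centralizing $I$.

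So everything reduces to checking that $Z(I)$ is an ideal of $A$, i.e.\ (by the Corollary after Proposition~\ref{normal2}) a normal subgroup of $(A,\circ)$, a normal subgroup of $(A,*)$, and $\lambda$-invariant. The $(A,\circ)$-part is formal: since $I\trianglelefteq(A,\circ)$, both $C_{(A,\circ)}(I)$ and $\Ker(\lambda|^I)$ are normal subgroups of $(A,\circ)$, hence so is $Z(I)$. For the $(A,*)$-part I would use $\lambda_{a\circ b}=\lambda_a\lambda_b$, $\lambda_{a^{-\circ}}=\lambda_a^{-1}$, $a\circ u=a*\lambda_a(u)$ and $a*b=a\circ\lambda_a^{-1}(b)$ (this last from~(\ref{doublech})), together with the fact that each $\lambda_x$ is a $*$-automorphism fixing $I$ setwise, hence preserving $C_{(A,*)}(I)$; this reduces closure of $Z(I)$ under $*$, $*$-inversion and $*$-conjugation to one statement, namely that $\lambda_a^{-1}(b)\in\Ker(\lambda|^I)$ whenever $a,b\in Z(I)$ — for then $\lambda_{a*b}=\lambda_a\circ\lambda_{\lambda_a^{-1}(b)}$ restricts to the identity on $I$, and similarly for the other cases.

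Thus the heart of the matter is the $\lambda$-invariance of $Z(I)$, and this in turn comes down to the $\lambda$-invariance of $N:=\Ker(\lambda|^I)$: for $x\in A$ and $a\in Z(I)$, that $\lambda_x(a)$ still $*$-centralizes $I$ is immediate, and $\lambda_i(\lambda_x(a))=\lambda_x(a)$ follows from $\lambda_i\lambda_x=\lambda_{i\circ x}=\lambda_x\lambda_{i'}$ with $i'=x^{-\circ}\circ i\circ x\in I$ (normality of $I$ in $(A,\circ)$) and $\lambda_{i'}(a)=a$; what remains is exactly $\lambda_x(b)\in N$ for $b\in N$. I would establish this by expanding $\lambda_x(b)=x^{-*}*(x\circ b)$ and verifying $\lambda_x(b)\circ i=\lambda_x(b)*i$ for $i\in I$ directly, repeatedly invoking the brace identity~(\ref{lsb}) and~(\ref{doublech}) and using the normality of $I$ in \emph{both} group structures to move the relevant $I$-element past $x$; this is the step I expect to carry the real computational weight.

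If that direct verification proves awkward, there is a cleaner but less explicit route. Besides being a finitely cocomplete homological category satisfying {\rm (Huq=Smith)}, $\SKB$ is a Mal'tsev (hence congruence-permutable) variety, so its Smith commutator on $\Equ_A\SKB$ is additive in each variable; hence the family of equivalence relations $S$ on $A$ with $[R[q_I],S]=0$ (where $q_I\colon A\onto A/I$) is closed under arbitrary joins and has a largest element $\zeta$, and the normal subobject $I_\zeta$ associated with $\zeta$ is then, by {\rm (Huq=Smith)} and the order-isomorphism between ideals and congruences, the centralizer of $I$. The explicit route has the added benefit of yielding the description $Z(I)=C_{(A,\circ)}(I)\cap C_{(A,*)}(I)\cap\Ker(\lambda|^I)$.
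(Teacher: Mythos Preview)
Your primary route aims for more than the paper does: you want to show that the explicit set
\[
Z(I)=C_{(A,*)}(I)\cap C_{(A,\circ)}(I)\cap\Ker(\lambda|^I)
\]
is itself an ideal, which would give both existence and an explicit description of the centralizer. The reduction you outline is correctly organised (in particular, once $\lambda_x(a)\in C_{(A,*)}(I)$, $\lambda_i(\lambda_x(a))=\lambda_x(a)$ and $\lambda_x(a)\in N$ are known, $\circ$-centralization follows), but the decisive step ``$\lambda_x(a)\in N$ for $a\in Z(I)$'' is never carried out: you say you ``would'' verify it by expanding and invoking~(\ref{lsb}) and~(\ref{doublech}), and then immediately propose a fallback ``if that direct verification proves awkward''. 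This is not a routine omission; it is exactly the place where the two group structures interact nontrivially, and the paper pointedly does \emph{not} claim that $Z(I)$ is an ideal. As written, your main argument is therefore incomplete.

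Your fallback via additivity of the Smith commutator in a Mal'tsev variety is correct and does establish the proposition.

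The paper's own proof is different from both of your routes, and more elementary than your fallback. It uses the same characterisation ``$J$ centralizes $I$ iff $J\subseteq Z(I)$'', but instead of proving $Z(I)$ is an ideal it shows that the family of ideals contained in $Z(I)$ is closed under unions of chains (so Zorn's Lemma yields a maximal element) and under binary joins $J_1*J_2=J_1\circ J_2$ (because $C_{(A,*)}(I)$ is a $*$-subgroup while $C_{(A,\circ)}(I)$ and $\Ker(\lambda|^I)$ are $\circ$-subgroups); join-closure then upgrades ``maximal'' to ``largest''. This sidesteps both the unproved $\lambda$-invariance of $Z(I)$ and the appeal to general commutator theory, at the price of not identifying the centralizer explicitly.
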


\begin{proof} The zero ideal centralizes $I$ and the union of a chain of ideals that centralize $I$ centralizes $I$. Hence there is a maximal element in the set of all the ideals of $A$ that centralize $I$. Now if $J_1$ and $J_2$ are two ideals of $A$, then $J_1*J_2=J_1\circ J_2$ is the join of $\{J_1,J_2\}$ in the lattice of all ideals of $A$. Now $J_1$ centralizes $I$ if and only if (1) $J_1\subseteq C_{(A,*)}(I)$, the centralizer of the normal subgroup $I$ in the group $(A,*)$; (2) $J_1\subseteq C_{(A,\circ)}(I)$, the centralizer of the normal subgroup $I$ in the group $(A,\circ)$; and (3) $J_1$ is contained in the kernel of the group morphism $\lambda|^I\colon (A,\circ)\to\Aut(I,*)$, which is a normal subgroup of $(A,\circ)$. Similarly for $J_2$. Hence if both $J_1$ and $J_2$ centralize $I$, then $J_1*J_2\subseteq C_{(A,*)}(I)$, and $J_1\circ J_2\subseteq C_{(A,\circ)}(I)\cap\ker\lambda|^I$. Therefore $J_1*J_2=J_1\circ J_2$ centralizes $I$. It follows that the set of all the ideals of $A$ that centralize $I$ is a lattice. Hence the maximal element in the set of all the ideals of $A$ that centralize $I$ is the largest element in that set.\end{proof}

In particular, the centralizer of the improper ideal of a left skew brace $A$ is the {\em center} of $A$.

\bigskip


%


A description of the free left skew brace over a set $X$ is available, in a language very different from ours, in \cite{Orza}.


\begin{thebibliography}{99}
		
	\bibitem{Barr} M. Barr, {\em Exact categories},  Lecture Notes in Math. \textbf{236} (1971), Springer, Berlin, pp. 1--120. 
	
	\bibitem{Bergman} G. M. Bergman, {\em Modules over coproducts of rings,} Trans. Amer. Math. Soc. {\bf 200} (1974), 1--32. 
	 
	\bibitem{BB} F. Borceux and D. Bourn, {\em  Mal'cev, Protomodular, Homological and Semi-Abelian Categories}, Math. and Its Applications {\bf 566}, Kluwer, 2004.
	
	\bibitem{BC} F. Borceux and M. M. Clementino, {\em  Topological semi-Abelian algebras}, Advances in Math. {\bf 190} (2005), 425--453.
	
	\bibitem{B0} D. Bourn, {\em Normalization equivalence, kernel equivalence and affine categories}, Lecture Notes in Math. \textbf{1488} (1991), Springer, Berlin, pp. 43--62.
	
	\bibitem{B2} D. Bourn, {\em Mal'tsevCategories and fibration of pointed objects}, Applied categorical structures {\bf 4} (1996), 302--327.
	
  \bibitem{B1} D. Bourn, {\em Normal subobjects and abelian objects in protomodular categories}, J. Algebra {\bf 228} (2000), 143--164.
	
	\bibitem{Normal} D. Bourn, {\em Normal functors and strong protomodularity},
	Theory Appl. Categ. {\bf 7}(9) (2000), 206--218.
	
	\bibitem{B10} D. Bourn, {\em Commutator theory  in regular Mal'tsev categories}, in: ``Galois theory, Hopf algebras, and Semiabelian categories'', G. Janelidze, B. Pareigis and W. Tholen Eds,  Fields Institute Communications {\bf 43}, Amer. Math. Soc. (2004), pp. 61--75.
	
	\bibitem{BG1}  D. Bourn and M. Gran, {\em Centrality and normality in protomodular categories}, Theory and Applications of Categories {\bf 9} (2002), 151--165.
	
	\bibitem{BG2}  D. Bourn and M. Gran, {\em Centrality and connectors in Maltsev categories}, Algebra Universalis {\bf 48} (2002), 309--331.
	
	\bibitem{BJ} D. Bourn and G. Janelidze, {\em Characterization of protomodular varieties of universal algebras}, Theory Appl. Categ. {\bf 11}(6) (2003), 143--147. 
	
	
	\bibitem{23} V. G. Drinfel'd, {\em On some unsolved problems in quantum group theory,} in ``Quantum groups (Leningrad, 1990)'', P. P. Kulish Ed., Lecture Notes in Math. {\bf 1510}, Springer, Berlin, 1992, pp. 1--8.
	
	 \bibitem{libromio} A. Facchini, {\em ``Semilocal categories and modules with semilocal endomorphism rings''}, Progress in Math. {\bf 331}, Birkh\"auser/Springer, Cham, 2019. 
	 
	
	
	
	
	
	\bibitem{GV} L. Guarnieri and L. Vendramin, {\em Skew braces and the Yang-Baxter equation,} Math. Comp.  {\bf 86}(307)  (2017), 2519--2534. 
	
	
	
	\bibitem{H} S.A. Huq, {\em Commutator, nilpotency and solvability in categories}, Quart. J. Oxford \textbf{19}, 1968, 363--389.
	
	\bibitem{JMT} G. Janelidze, L. M\'arki and W. Tholen, {\em Semi-abelian categories,} J. Pure Appl. Alg. {\bf 168} (2002), 367--386.
	
	\bibitem{Commutators} G. Janelidze, L. M\'arki and S. Veldsman, {\em Commutators for near-rings: Huq $\ne$ Smith,}
	Algebra Universalis {\bf 76}(2) (2016), 223--229.
	
	\bibitem{KSV} A. Konovalov, A. Smoktunowicz and L. Vendramin, {\em On skew braces and their ideals,} Exp. Math. {\bf 30} (2021), no. 1, 95--104. 
	
	
	
	
	
	

\bibitem{MM} S. Mantovani and G. Metere, {\em Normalities and commutators}, J. Algebra {\bf 324} (2010), 2568--2588.
	
\bibitem{Orza} J. Orza, {\em A construction of the free skew brace}, 
	https://arxiv.org /abs/2002.12131.
	
		\bibitem{S1976} J. D. H. Smith, ``Mal'tsevVarieties'', Lecture Notes in Math. {\bf 554}, Springer,
	Berlin-New York, 1976.
	
	\bibitem{SV} A. Smoktunowicz and L. Vendramin, {\em On skew braces (with an appendix by N. Byott and L. Vendramin)}, J. Comb. Algebra {\bf 2}(1) (2018),  47--86.
	\end{thebibliography}
\end{document}